\documentclass[10pt,reqno]{amsart}
\usepackage{amsmath}
\usepackage{mathtools}
\numberwithin{equation}{section}
\usepackage[ngerman, english]{babel}
\usepackage[T1]{fontenc}
\usepackage{microtype}
\usepackage{amssymb}
\usepackage{enumitem}
\usepackage{tikz}
\usepackage{cancel}
\usepackage{graphicx}
\usepackage{esint}
\usepackage[utf8]{inputenc}
\usepackage{hyperref}
\usepackage{relsize}
\usepackage[T1]{fontenc}
\usepackage{tikz}
\usepackage{lmodern}
\usepackage{bigints}
\usepackage{relsize}
\newtheoremstyle{boldremark}     
{\topsep}                      
{\topsep}                      
{\normalfont}                  
{0pt}                          
{\bfseries}                    
{.}                            
{ }                            
{}                             
\DeclareUnicodeCharacter{0301}{\'{u}}
\usetikzlibrary{patterns}
\allowdisplaybreaks
\usepackage{bbm}
\newtheoremstyle{break}
{}
{}
{\itshape}
{}
{\bfseries}
{.}
{\newline}
{}

\newtheorem{theorem}{Theorem}[section]
\newtheorem{corollary}{Corollary}[section]
\newtheorem{definition}{Definition}[section]
\newtheorem{proposition}{Proposition}[section]
\newtheorem{lemma}{Lemma}[section]

\newtheorem{example}{Example}[section]
\newtheorem{remark}{Remark}[section]

\newcommand{\dint}[1]{\mathrm{d} #1}

\newcommand{\limesinf}[2]{\displaystyle \liminf_{#1 \rightarrow #2}}

\newcommand{\limes}[2]{\displaystyle \lim_{#1 \rightarrow #2}}

\newcommand{\norm}[2]{\left\|#1 \right\|_{#2}}

\newcommand{\indicator}[1]{\mathds{1}_{#1}}

\newcommand{\scalprod}[2]{\langle #1,#2 \rangle}

\newcommand{\palme}{\Upsilon}
\newcommand{\call}[1]{\mathcal{#1}}


\newcommand{\bbN}{\mathbb{N}}

\newcommand{\bbR}{\mathbb{R}}
\newcommand{\bbS}{\mathbb{S}}

\newcommand{\calA}{\mathcal{A}}
\newcommand{\calB}{\mathcal{B}}

\newcommand{\calF}{\mathcal{F}}

\usepackage[T1]{fontenc}
\usepackage{lmodern}
\usepackage{dsfont}

\newcommand{\mengdiff}[2]{#1\setminus #2}
\newcommand{\proj}[2]{\Pi_{#1}^{#2}}
\usepackage{comment}
\usepackage[backend  = biber,giveninits = true,style = numeric]{biblatex}
\renewbibmacro{in:}{}

\DeclareFieldFormat[article]{title}{#1}
\AtEveryBibitem{%
  \clearfield{doi}%
  \clearfield{eprint}%
  \clearfield{number}%
  \clearfield{issn}
  \clearfield{urldate}
}

\DeclareFieldFormat{pages}{#1}
\DeclareFieldFormat{pagetotal}{#1}
\AtEveryBibitem{\footnotesize}

\begin{document}

\title{Nonlocal Fisher information: lifting, local limit, and the Blachman-Stam inequality}

\author{Fabian Merz$^{a}$}
\email{fabian.merz@uni-ulm.de}
\author{Rico Zacher$^{*,a}$}
\thanks{$^*$Corresponding author}
\email[Corresponding author:]{rico.zacher@uni-ulm.de}
\address{$^a$ Institut f\"ur Angewandte Analysis, Universit\"at Ulm, Helmholtzstra\ss{}e 18, 89081 Ulm, Germany.}


\begin{abstract}
We show that the nonlocal Fisher 
information -- defined as the entropy dissipation of the Boltzmann entropy for nonlocal heat equations -- admits a natural lifting in the sense of Guillen and Silvestre \cite{guillen_landau_2025-1}. Important examples include the discrete Fisher information arising in Markov chains and the fractional Fisher information $i_s$ associated with the fractional Laplacian
$(-\Delta)^{s}$ on $\bbR^d$, $s\in (0,1)$. 
We further establish a Blachman-Stam inequality (BSI) for the fractional Fisher information $i_s$, and prove that, for a large class of functions, $i_s$ converges to the classical Fisher information as $s\to 1$. Through this nonlocal-to-local limit, we recover the classical BSI and the lifting property of the classical Fisher information.    
\end{abstract}

\maketitle

\bigskip
\noindent \textbf{Keywords:} nonlocal Fisher information, Markov chains, fractional Laplacian, lifting property, nonlocal-to-local limit, Blachman--Stam inequality

\vspace{12pt}
 \noindent \textbf{MSC(2020)}: 94A17 (primary), 35R11, 60J27.

\section{Introduction}
\subsection{Lifting the classical Fisher information and kinetic theory} \label{SubSec1.1}
\noindent In two seminal works, it was recently shown that the Fisher information 
\begin{equation} \label{FishCla}
i(f): = \displaystyle \int_{\bbR^{d}} f(x) |\nabla (\log f(x))|^{2}\ \dint{x}
\end{equation}
is nonincreasing along solutions to the spatially homogeneous Landau ($d=3$) and Boltzmann equation, 
 see \cite{guillen_landau_2025} and \cite{imbert_monotonicity_2026}. This striking monotonicity holds for a wide class of interaction potentials (in the Landau case) and collision kernels (in the Boltzmann case), covering all physically relevant power-law interactions.
An important consequence is the global existence of smooth solutions.

In order to motivate the main goal of this paper, let us briefly describe some of the key ideas developed in \cite{guillen_landau_2025} and \cite{imbert_monotonicity_2026} in order to prove 
the monotonicity of the Fisher information. Recall that both equations are of the form
\[
\partial_t f=Q(f),
\]
with a nonlinear and nonlocal collision operator $Q$. We write $Q_L$ for the Landau and $Q_B$ for the Boltzmann operator. 

The first crucial observation is that in both cases, $Q(f)$ can be expressed as the evaluation of a projection of a linear operator, acting on functions defined on $\bbR^{2d}$, evaluated at $f \otimes f$, where for a function $f:\bbR^{d} \rightarrow \bbR$, $ f \otimes f$ is given by $(f \otimes f)(v,w) = f(v) f(w),\ v,w \in \bbR^{d}$. For instance, in case of the Boltzmann operator, it is not difficult to see that
\[ 
Q_{B}(f)(v) = \int_{\bbR^{d}} \mathcal{B}(f \otimes f)(v,v_{\ast})\ \dint{v_{\ast}},
\]
where
\[
\mathcal{B}(F)(v,v_{\ast}) = \int_{\bbS^{d-1}} \big(F(v^{\prime},v_{\ast}^{\prime}) - F(v,v_{\ast})\big)\cdot B(v- v_{\ast},\sigma)\ \dint{\sigma}.
\]
Here, the post-collisional velocities $v',v_\ast'$ are given in terms of 
the pre-collisional velocities $v,v_\ast$ 
and the scattering direction $\sigma$.

The second key observation is the following. Consider the Fisher information $I$, defined on a suitable class of nonnegative functions on $\bbR^{2d}$, i.e.
\[
I(F)=\displaystyle \int_{\bbR^{d}} \int_{\bbR^{d}}F(x,y)|\nabla (\log F(x,y))|^{2}\ \dint{x}\,\dint{y}.
\]
Suppose that $I$ is nonincreasing along solutions
$F(s,v,w)$ of the initial value problem \begin{align}\label{Fproblem}\begin{cases}
	\partial_{s}F = \mathcal{B}(F),\\
	F(0) = f(t,v) f(t,w),	
\end{cases}
\end{align}
where $f(t,\cdot)$ denotes an arbitrary solution to the space-homogeneous Boltzmann equation at a fixed time $t>0$.
Under this assumption, the Fisher information $i$ on $\bbR^{d}$ acts as a Lyapunov functional for the space-homogeneous Boltzmann equation. An analogous statement holds for the space-homogeneous Landau equation, with $\mathcal{B}$
replaced by the corresponding linear operator appearing in the representation of $Q_L$ (see \cite{guillen_landau_2025}).

This conclusion relies on two structural properties of the Fisher information: \begin{align*}
&\text{(A)}\quad I(f \otimes f) = 2\, i(f),\; \text{ for suitable probability densities (pdfs) $f$ on $\bbR^{d}$} 
\intertext{and} 
&\text{(B)}\quad  I(F)  \geq 2 \, i(\proj{\lambda}{1}F),\; \text{ for suitable and symmetric pdfs $F$ on $\bbR^{2d}$.}\notag 
\end{align*}
Here, $\lambda$ denotes the $d-$dimensional Lebesgue measure and $\proj{\lambda}{1}F$ is the projection of $F$ on the first $d$ coordinates, i.e. $\proj{\lambda}{1}F(x) = \int_{\bbR^{d}} F(x,y)\ \dint{y}.$

Let $F$ be the solution of \eqref{Fproblem} and define $\theta:\,[0,\infty) \rightarrow \bbR$ via 
\[
\theta(s) = \frac{1}{2}I\big(F(s,\cdot)\big) - i\big(\proj{\lambda}{1}F(s,\cdot)\big).
\]
Then, by (A) and (B), $\theta$ attains its minimum at $s = 0$. Consequently, 
\begin{align*}
 0 & \leq  \frac{1}{2}\langle I^{\prime}(F), \partial_{s}F \rangle|_{s = 0} - \langle  i^{\prime}(\proj{\lambda}{1} F), \partial_{s} \proj{\lambda}{1} F\rangle|_{s = 0} \notag \\
& = \frac{1}{2}\langle I^{\prime}(f \otimes f), \mathcal{B}(f \otimes f) \rangle - \langle  i^{\prime}(\proj{\lambda}{1}(f \otimes f)), \proj{\lambda}{1}( \calB(f \otimes f))\rangle  \notag \\
& =  \frac{1}{2} \langle I^{\prime}(f \otimes f), \calB(f \otimes f)  \rangle - \langle i^{\prime}(f),Q_B(f) \rangle,
\end{align*}
which yields
\begin{align} \label{KeyInequ1}
& \langle i^{\prime}(f),Q_{B}(f) \rangle \leq \frac{1}{2}\langle I^{\prime}(f \otimes f),\calB(f \otimes f)  \rangle.
\end{align}
This inequality implies that $\partial_t i(f)\le 0$ whenever $\partial_s I(F)|_{s=0}\le 0$. For the latter, it suffices to show that
\begin{equation} \label{KeyInequ12}
\langle I'(F),\calB(F)\rangle\le 0 
\end{equation}
for every sufficiently smooth positive function $F$ on $\bbR^{2d}$ satisfying $F(x,y)=F(y,x)$. Proving the monotonicity of $i(f)$ via \eqref{KeyInequ12} is the strategy used in \cite{imbert_monotonicity_2026}, and analogously in \cite{guillen_landau_2025} in the Landau case. We remark that, in fact, equality holds in \eqref{KeyInequ1}, see \cite[Sec.\ 3]{imbert_monotonicity_2026}. 

The above argument with $\theta$ applies equally if the pair $(i,I)$ is replaced by any pair of functionals $(j,J)$ satisfying (A) and (B). In the recent work \cite{guillen_landau_2025-1}, Guillen and Silvestre introduced in this context the notion of a {\em lifting}
for functionals defined over probability densities on $\bbR^d$. Roughly speaking, a functional $J$ is called a lifting of the functional $j$ if properties (A) and (B) are satisfied. Guillen and Silvestre note that, in addition to the (classical) Fisher information, entropy also admits a natural lifting, and that it would be interesting to identify further functionals with this property.
\subsection{Nonlocal Fisher information} \label{SubSec1.2}
The main objective of this paper is to show that a large class of functionals, which can be understood as  {\em nonlocal Fisher information}, admits a natural lifting in the sense described above. Our setting is considerably more general than in \cite{guillen_landau_2025-1}, as we consider probability densities on arbitrary metric spaces. Important examples include the discrete Fisher information arising in Markov chains, as well as the fractional Fisher information $i_s$ associated with the fractional Laplacian
$(-\Delta)^{s}$ on $\bbR^d$, $s\in (0,1)$. Passing to the limit $s\to 1$, the lifting property of $i_s$ yields that of the classical Fisher information.

To describe our setting, let $(M,d)$ be a metric space, $\calB(M)$ its Borel $\sigma$-algebra, and 
$k:\,M \times \calB(M) \rightarrow \overline{\bbR}$ a {\em kernel} on $(M,d)$. This means that, for every $A \in \calB(M)$, the map $x \mapsto k(x,A)$ is Borel measurable, and for each fixed $x \in M$, the map $k(x,\cdot): \calB(M) \rightarrow [0,\infty]$ defines a $\sigma$-finite measure. 
With the kernel $k$, we associate the nonlocal operator $L$ defined by
\begin{align}
Lf(x) = \int_{M\setminus \{x\}} \big(f(y) - f(x)\big) \ k(x,\dint{y}),\quad x \in M, \label{defL} 
\end{align} 
for measurable functions $f:\, M \rightarrow \bbR$ such that the integral above exists for all $x \in M$. Following \cite{weber_liyau_2023}, we also allow
for the possibility that the integral in \eqref{defL} is singular. In this case, $Lf(x)$ is to be understood in the sense that $\int_{M\setminus \{x\}}$ is replaced by 
$\lim_{\varepsilon \to 0+} \int_{\mengdiff{M}{B_{\varepsilon}(x)}}$.

A major role in our framework is played by the operator $\Psi_{\palme}$ which is defined, for measurable $f:\,M\to   \bbR$, by
\begin{align}\Psi_{\palme}(f)(x) := \int_{\mengdiff{M}{\{x\}}}\palme(f(y) - f(x))\ k(x,\dint{y}),\quad x \in M, \label{Psipalme}\end{align}
where $\palme$ is given by $\palme(r) = e^{r}-1 -r,\, r \in \bbR$. We also write $\Psi_{\palme}^{k}$ to indicate the dependence on the kernel $k$. Since $\palme$ is nonnegative, the integral in \eqref{Psipalme} takes values in $[0,\infty]$.

The operator $\Psi_{\palme}$ was introduced in \cite{dier_discrete_2021} in the discrete setting in the context of Li-Yau inequalities on graphs. It plays a central role in \cite{weber_entropy_2021}, where a nonlocal Bakry-\'{E}mery theory was developed for discrete Markov chains. In the more general setting considered here, $\Psi_{\palme}$ also appears in \cite{weber_liyau_2023} and is crucial for establishing a reduction principle that enables the derivation of Li-Yau inequalities for general nonlocal operators.

The significance of the $\Psi_{\palme}$-operator lies in the fact that it provides, in several respects, a natural nonlocal analogue of the carr\'e du champ operator 
$\Gamma$ associated with a Markov generator, see \cite{bakry_analysis_2014} and Section \ref{Sec3} below for the definition of $\Gamma$. For example, the identity
\begin{equation}\label{eq:fundamentalidentity}
L (\log f) =\frac{L f }{f} - 
\Psi_\Upsilon(\log f),
\end{equation}
see \cite[Sec.\ 2]{weber_liyau_2023}, is the nonlocal counterpart of $\Delta (\log f)=\frac{\Delta f}{f}-|\nabla (\log f)|^2$. 

The quadratic term in the last relation also appears in the definition of the Fisher information $i(f)$ given in \eqref{FishCla}. Replacing this gradient term by $\Psi^k_\Upsilon(\log f)$ and $\bbR^d$ by $M$ formally leads to the {\em nonlocal Fisher information} defined by
\begin{equation} \label{NonlocFish}
i_{k}(f): = \int_{M} f(x) \Psi_{\palme}^{k}(\log f)(x)\ \dint{\mu(x)},
\end{equation}
for strictly positive measurable functions $f:\,M \rightarrow \bbR$. Here $\mu$ is a fixed $\sigma$-finite measure on $\calB(M)$ and it has to be ensured that 
$\Psi_{\palme}^{k}(\log f)$ is measurable. This is, for instance, guaranteed if $k$ is the countable sum of finite kernels, see \cite[Lemma 3.2]{kallenberg_foundations_2021}.
Observe that $i_{k}(f)\in [0,\infty]$.

A typical situation is that $\mu$ is an invariant and reversible measure for the Markov semigroup $(P_t)_{t\ge 0}$ generated by $L$ in a suitable functional-analytic setting under appropriate conditions on the kernel $k$. In this case, $i_k$
appears as the negative time derivative of entropy along the heat flow associated with the operator $L$, i.e.
\begin{equation} \label{entfish}
\partial_t h(P_t f)=-i_k(P_t f),\quad t\ge 0,
\end{equation}
for every suitable strictly positive probability density $f$ with respect to $\mu$, where the entropy $h(f)$ is given by
\begin{equation}
    h(f)=\int_M f \log f\,\dint{\mu}.
\end{equation}
We refer to \cite[Prop.\ 3.3]{weber_entropy_2021}
for a rigorous treatment in the discrete Markov chain setting, see also \cite[Prop.\ 5.10]{erbar_gradient_2014}
in the framework of jump processes on $\bbR^d$. Equation \eqref{entfish} is a nonlocal variant of
the classical entropy–entropy dissipation relation
$\partial_t h(P_t f)=-i(P_t f)$ for the heat flow generated by the Laplacian on $\bbR^d$.

The notion of nonlocal or {\em fractional} Fisher information has already been considered in \cite{erbar_gradient_2014,granero-belinchon_fractional_2017,rougerie_two_2020,salem_propagation_2019}, \cite{toscani_fractional_2016}. The definition used in \cite{granero-belinchon_fractional_2017} and \cite{salem_propagation_2019} corresponds to the special case of \eqref{NonlocFish} with $L=-(-\Delta)^s$, $s\in (0,1)$.
More general jump processes on $\bbR^d$ are studied in
\cite{erbar_gradient_2014}, where the concept of nonlocal Fisher information is introduced in a form that essentially coincides with our definition. We further refer to \cite{cover_elements_2006, erbar_ricci_2012,weber_entropy_2021} for the discrete Markov chain setting. We point out that in the literature the nonlocal Fisher information is often expressed in terms of products of the form $(b-a)(\log b-\log a)$, especially in symmetric or discrete settings. In fact, if $k(x,\dint{y}) \dint{\mu}(x)$ is symmetric, then $i_k(f)$ can be rewritten as
\begin{equation*}
i_k(f)=\frac{1}{2} \int_{M} \int_{ M} \big(f(y) - f(x)\big) \big(\log(f(y)) - \log(f(x))\big)  k(x,\dint{y})\, \dint{\mu(x)},
\end{equation*}
see \cite[p.\ 26]{weber_entropy_2021} for a corresponding computation in the setting of discrete Markov chains.
In view of the important role of the $\Psi_\Upsilon$-operator as a natural nonlocal analogue of 
$\Gamma$, we prefer the representation \eqref{NonlocFish}.
\subsection{Main results}
In this paper we establish several important properties of nonlocal Fisher information. 

Our first main result shows that, in the general setting described above, the functional $i_k$ admits a natural lifting, which is given by the nonlocal Fisher information $i_{k \oplus k}$, see Theorem \ref{ThmLiftingNonlocalFisher}. This functional is defined for suitable probability densities on the metric space $M\times M$ with respect to $\mu \otimes \mu$.
The kernel $k \oplus k$ denotes the tensorization of $k$ with itself and is defined in a natural way, see Definition \ref{TensorKernel}.

A key ingredient in the proof of Theorem \ref{ThmLiftingNonlocalFisher} is a convexity-type inequality for the nonlocal Fisher information operator $f\mapsto f \Psi_\Upsilon(\log f)$, due to Weber and Zacher \cite[Lemma 2.2]{weber_liyau_2023}, see Lemma \ref{main inequlity frederic} below. This fundamental inequality can be viewed as a nonlocal analogue (and generalization) of the inequality
\begin{equation} \label{gradientIE}
\int_{\bbR^d}\big|\nabla_x (\log H(x,y))\big|^2 H(x,y) f(y)\,\dint{y}\ge \big|\nabla (\log Pf(x))\big|^2 Pf(x),\;\; x\in \bbR^d,
\end{equation}
where $Pf(x)=\int_{\bbR^d}H(x,y)f(y)\,\dint{y}$ and $H$ and $f$ are sufficiently regular positive functions. 
The direct nonlocal counterpart of \eqref{gradientIE} reads 
\begin{equation} \label{PsiUpsIE}
\int_{\bbR^d} \Psi_\Upsilon (\log H(\cdot,y))(x)H(x,y)f(y)\,\dint{y} \geq \Psi_\Upsilon(\log P f)(x) P f(x),\;\; x\in \bbR^d,
\end{equation}
and appears as a particular case of Lemma \ref{main inequlity frederic}. Note that, in \cite{weber_liyau_2023}, the proof of a Li-Yau inequality $(-\Delta)^s (\log u)\le \frac{C(d,s)}{t}$ for positive solutions of the fractional heat equation $\partial_t u + (- \Delta)^s u = 0$
in $\bbR^d$ with $s\in (0,1)$ relies crucially on the inequality \eqref{PsiUpsIE}.

Our second main result, Theorem \ref{ThmBSI}, whose proof also makes essential use of \eqref{PsiUpsIE}, is a Blachman-Stam inequality (BSI) for the fractional Fisher information $i_s$ associated with the operator $L=-(- \Delta)^s$ on $\bbR^d$, $s\in (0,1)$. The corresponding kernel is given by
\begin{equation} \label{fractionalkernel}
    k_{s}(x,\dint{y}) =\frac{c(d,s)}{|x-y|^{d+2s}}\ \dint{y}\quad\text{with}\quad c(d,s)= \displaystyle \left(\int_{\bbR^{d}} \frac{1-\cos(\xi_{1})}{|\xi|^{d+2s}} \ \dint{\xi}\right)^{-1},
\end{equation}
see e.g.\ \cite[Sec.\ 3]{di_nezza_hitchhikers_2012}.
Our nonlocal BSI asserts that for every $\alpha\in (0,1)$ and probability densities $f,g$ on $\bbR^d$, one has 
\begin{align}
    i_{s}(f_{\sqrt{\alpha}} \ast g_{\sqrt{1-\alpha}}) \leq \alpha^{s}\,
     i_{s}(f) + (1-\alpha)^{s}\,   i_{s}(g), \label{BSI}
\end{align}
where $\ast$ denotes convolution in $\bbR^d$ and $f_{c}(\cdot) := {c^{-d}} f\left(\frac{\cdot}{c}\right)$ for a constant $c>0$.
Formally taking $s=1$ in \eqref{BSI} recovers the classical Blachman-Stam inequality for the Fisher information $i$ given by \eqref{FishCla}, see e.g.\ \cite[Sec.\ 1]{villani_fisher_2025}. This inequality is a 
fundamental tool in information theory, probability and the analysis of diffusion equations, as it provides a sharp control of the Fisher information under convolution. It plays an important role in the study of entropy, heat flows, and information-theoretic proofs of the central limit theorem (CLT); see, for instance, the seminal work of Barron \cite{barron_entropy_1986}. We expect that the nonlocal BSI \eqref{BSI} will be similarly useful in the context of jump processes and may serve as a key ingredient in an entropic proof of a corresponding CLT in the spirit of Barron.

The third main result, Theorem \ref{NonlocalToLocal}, shows that for a large class of positive functions $f$ on $\bbR^d$, one has 
\begin{equation} \label{NLTL}
i_s(f)\to i(f) \quad \text{as}\;\, s\to 1.
\end{equation}
Having this result at hand, we can rigorously take the limit $s\to 1$ in the nonlocal BSI, as well as in the conditions appearing in the definition of the lifting of $i_s$, thereby recovering the classical results. In particular, this demonstrates the robustness of our estimates for $i_s$ as $s\to 1$.

To the best of the authors' knowledge, all three results described above seem to be new. Upon completing this work, we became aware that the lifting property of the fractional Fisher information $i_s$ was already established in a stochastic context in \cite[Proposition 3.1 (iii)]{salem_propagation_2019}. The proof given there differs from ours -- it does not use \eqref{PsiUpsIE} -- and is restricted to the special case of the fractional Laplacian.

The paper is organized as follows. In Section \ref{Sec2}, we extend the definition of lifting from \cite{guillen_landau_2025} to metric spaces and establish the lifting property of the nonlocal Fisher information 
$i_k$. Section \ref{Sec3} discusses Fisher information in the framework of Bakry–Émery $\Gamma$-calculus and shows that certain weighted versions of the classical Fisher information also admit a lifting. Section \ref{Sec4} is devoted to the nonlocal-to-local limit \eqref{NLTL}, while Section \ref{Sec5} contains the proof of the nonlocal BSI. Finally, the appendix provides a proof of the lifting property of entropy along with several auxiliary results.
\section{Lifting property of nonlocal Fisher information}\label{Sec2}
\noindent We begin by introducing some basic concepts and notations. 

Given a metric space $(M,d)$, we denote its Borel $\sigma$-algebra by $\calB(M)$. Measurability of 
a function $f:\,M \rightarrow \overline{\bbR}$ will always refer to $\calB(M)$. For two metric spaces
$(M_{i},d_{i})$, $i = 1,2$, we equip $M_{1} \times M_{2}$ with the metric $d$ given by $d((x,y),(u,v)) = d_{1}(x,u) + d_{2}(y,v)$, and we always assume that $\calB(M_{1} \times M_{2}) = \calB(M_{1}) \otimes \calB(M_{2})$. This assumption is satisfied in the most interesting cases, e.g.\ if the topologies generated by the metrics $d_{i}$, $i=1,2$, have a countable basis, see e.g. \cite[Theorem 5.10]{elstrodt_mas-_2018}.

Let $(M,d)$ be a metric space and $\mu$ a measure on $\calB(M)$. For a function $F:\, M \times M \rightarrow \bbR$ we define the two projections with respect to $\mu$
\begin{align*}
\proj{\mu}{1}F(x) &= \int_{M} F(x,y) \ \dint{\mu(y)},\quad x \in M,\\
\proj{\mu}{2}F(y) &= \int_{M} F(x,y) \ \dint{\mu(x)},\quad y\in M.
\end{align*}
Note that if $F$ is symmetric, i.e.\ $F(x,y)=F(y,x)$ for all $x,y\in M$, then $\proj{\mu}{1}F(x)=\proj{\mu}{2}F(x)$.
In this case we just write $\Pi_\mu$ for both projections.
For two functions $f,g:\,M \rightarrow \bbR$ the tensorization of $f$ and $g$ is given by $$f \otimes g:M \times M \rightarrow \bbR,\ (x,y) \mapsto f(x) g(y).$$ 

We next extend the definition of a lifting given in \cite[Definition 1]{guillen_landau_2025-1} to our more general setting.
\begin{definition} \label{LiftingGenDef}
Let $(M,d)$ be a metric space and $\mu$ a $\sigma$-finite measure on $\calB(M)$. Let further $j(f)$ be a functional defined on a subset $D(j)$ of the probability densities on $M$ with respect to $\mu$, and let $J(F)$ be a functional defined on a subset $D(J)$ of the probability densities on $M\times M$ with respect to $\mu\otimes \mu$. We say that $J$ is a {\bf lifting} of $j$ if the following conditions hold.
\begin{itemize}
\item[(i)] For every $f\in D(j)$ with $f \otimes f\in D(J)$,
\[
J(f \otimes f) = 2j(f).
\]
\item[(ii)] For every symmetric $F\in D(J)$ with $\Pi_\mu F\in D(j)$,
\[
J(F) \geq 2j(\Pi_\mu F).
\]
\end{itemize}
\end{definition}
A few functionals are known to admit a natural lifting. As shown in \cite{guillen_landau_2025-1,guillen_landau_2025}, the classical Fisher information $i(f)$ given by \eqref{FishCla} enjoys this property. The authors of \cite{guillen_landau_2025} also mention (without proof) that entropy possesses a natural lifting. For the reader's convenience we provide an argument for this statement in the appendix, see Section \ref{A1}.

Note that a lifting of a functional $j$ need not be unique. Indeed, if $J$ is a lifting of the functional $j$ we can add any nonnegative functional $M$ satisfying $M(f \otimes f) = 0$. Then $J + M$ is again a lifting of $j$.
From a probabilistic perspective, an example of such a functional $M$ is given by the squared covariance:
\begin{align*}
M(F) 
= &\Bigg(
    \int_{M}\!\!\int_{M} 
      x \cdot y \, F(x,y)\, \mathrm{d}\mu(x)\, \mathrm{d}\mu(y) \\
  &\quad
    - \Bigg[\int_{M} x \cdot \proj{\mu}{1}F(x)\, \mathrm{d}\mu(x)\Bigg]
      \Bigg[\int_{M} y \cdot \proj{\mu}{2}F(y)\, \mathrm{d}\mu(y)\Bigg]
  \Bigg)^{2}.
\end{align*}

We now turn to the tensorization of kernels, which will play a key role in the main theorem of this section. 
\begin{definition}\label{TensorKernel}
For $i=1,2$, let $(M_{i},d_{i})$ be a metric space and $k_i$ a kernel on $M_i$, as defined in Subsection \ref{SubSec1.2}. The {\bf tensorization} of $k_{1}$ and $k_{2}$, denoted by $k_{1} \oplus k_{2}$, is given by \begin{align}
&k_{1} \oplus k_{2}((x,y),A):= k_{1}(x,A^{y}) + k_{2}(y,A_{x}),\ (x,y) \in M_{1} \times M_{2}, \notag \\ & \ A \in \calB(M_{1}) \otimes \calB(M_{2}). \notag 
\end{align}
Here, 
\[
A_x=\{y\in M_2:\,(x,y)\in A\}\quad\text{and}\quad
A^y=\{x\in M_1:\,(x,y)\in A\}
\]
denote the $x$- and $y$-sections of $A$, respectively.
\end{definition}
\begin{remark}\label{KernProp}
{\em (i) In the situation of Definition \ref{TensorKernel}, it is in general not clear that $k_{1}\oplus k_{2}$ depends measurably on $(x,y)$. However, if both $k_{1}$ and $k_{2}$ are countable sums of finite kernels, then
$k_{1} \oplus k_{2}$ does possess this property; see the appendix for a proof. We recall that a kernel $k$ on the metric space $M$ is said to be finite if, for every $x \in M$, the measure $k(x,\cdot)$ is finite.  

Moreover, in the situation of Definition \ref{TensorKernel}, it is immediate that for each fixed $(x,y) \in M_{1}\times M_{2}$, the mapping $A\mapsto k_{1} \oplus k_{2}((x,y),A)$ defines a measure on $\calB(M_{1}) \otimes \calB(M_{2})$.

(ii) The notion of kernel tensorization given in Definition \ref{TensorKernel} already appears in the literature. In the discrete case, tensorization of kernels was, for instance, defined and used in \cite[Sec.\ 4]{weber_entropy_2021}. We further refer to \cite{mufa_coupling_1986} and the notes by Jansen \cite{jansen_jump_processes} on jump processes in a very general measure-space setting.
}
\end{remark}

Denoting the operators associated with $k_i$ by $L_i$, $i=1,2$ (cp.\ \eqref{defL}), $k_{1} \oplus k_{2}$ is the appropriate object to represent $L_1\oplus L_2$. This operator
acts on functions $f:\,M_{1} \times M_{2} \rightarrow \bbR$ via 
\begin{equation} \label{OperSum}
((L_1\oplus L_2)f)(x,y) = L_{1}(f^{y})(x) + L_{2}(f_{x})(y),
\end{equation}
where $f_{x}(\cdot ) = f(x,\cdot)$ and $f^{y}(\cdot ) = f(\cdot,y)$, see also \cite[p. 60]{bakry_analysis_2014} for its definition.

\begin{proposition}\label{Sumformulas} 
For $i=1,2$, let $(M_{i},d_{i})$ be a metric space and $k_i$ a kernel on $M_i$.
Let $(x,y)\in M_1\times M_2$ and $f:\, M_{1} \times M_{2} \rightarrow \bbR$ be measurable. Then the following statements hold.
\begin{itemize}
\item [(i)] If $L_{1}(f^y)(x)$ and $L_{2}(f_x)(y)$ exist, then 
\[
((L_{1} \oplus L_{2})f)(x,y)=\int_{M_{1} \times M_{2} \setminus \{(x,y)\}} \!\!\!\big(f(u,v) - f(x,y)\big)\ k_{1} \oplus k_{2}((x,y),\dint{(u,v)}).
\]
\item [(ii)] The $\Psi_\Upsilon$-operator associated with $k_{1} \oplus k_{2}$ is given by 
\begin{align}
\Psi_{\palme}^{k_{1} \oplus k_{2} }(f)(x,y) = \Psi_{\palme}^{k_{1}}(f^{y})(x) + \Psi_{\palme}^{k_{2}}(f_{x})(y).
\end{align}
\end{itemize}
\end{proposition}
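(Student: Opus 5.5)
The plan is to reduce both statements to one identity: the integral of a measurable function against the tensorized kernel $k_{1}\oplus k_{2}((x,y),\cdot)$ splits into two one-dimensional integrals along the two "axes" through $(x,y)$. Concretely, for fixed $(x,y)$, Definition \ref{TensorKernel} shows that the measure $k_{1}\oplus k_{2}((x,y),\cdot)$ equals $\nu_1+\nu_2$. Here $\nu_1$ is the image of $k_{1}(x,\cdot)$ under the embedding $u\mapsto (u,y)$, and $\nu_2$ is the image of $k_{2}(y,\cdot)$ under $v\mapsto (x,v)$. Indeed, $A^{y}$ is exactly the preimage of $A$ under $u\mapsto(u,y)$, and $A_x$ is the preimage under $v\mapsto (x,v)$. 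These embeddings are measurable (even continuous for the sum metric), so the pushforwards are well defined on $\calB(M_1)\otimes\calB(M_2)$. By the transformation formula for image measures we then obtain, for every measurable $\phi\ge 0$ on $M_1\times M_2$,
\[
\int \phi\ \dint{(k_{1}\oplus k_{2})((x,y),\cdot)} = \int_{M_1}\phi(u,y)\,k_1(x,\dint{u}) + \int_{M_2}\phi(x,v)\,k_2(y,\dint{v}).
\]
The same identity holds for integrable $\phi$ of arbitrary sign.

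Part (ii) is then immediate. Apply the identity to $\phi(u,v)=\palme(f(u,v)-f(x,y))\mathbbm{1}_{\{(u,v)\neq(x,y)\}}$, which is nonnegative and measurable. Note that $(u,y)\neq(x,y)$ iff $u\neq x$, and $(x,v)\neq (x,y)$ iff $v\neq y$. Hence the two terms are $\int_{M_1\setminus\{x\}}\palme(f^y(u)-f^y(x))\,k_1(x,\dint u)=\Psi^{k_1}_\palme(f^y)(x)$ and the analogous expression $\Psi^{k_2}_\palme(f_x)(y)$. No integrability issue arises because everything lives in $[0,\infty]$.

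For part (i), the nonsingular case follows in the same way, using $\phi(u,v)=(f(u,v)-f(x,y))\mathbbm{1}_{\{(u,v)\neq (x,y)\}}$. The main obstacle is the singular case, where $L_i$ is defined as a principal value. For that case I will interpret the right-hand side as $\lim_{\varepsilon\to0+}$ of the integral over the complement of $B_\varepsilon((x,y))$ in the product metric $d_1+d_2$. The key observation is that on the support of $\nu_1$ we have $d((u,y),(x,y))=d_1(u,x)$, so $(u,y)\notin B_\varepsilon((x,y))$ iff $u\notin B_\varepsilon(x)$, and likewise on the support of $\nu_2$. The truncated integral therefore splits as $\int_{M_1\setminus B_\varepsilon(x)}(f^y(u)-f^y(x))\,k_1(x,\dint u)+\int_{M_2\setminus B_\varepsilon(y)}(f_x(v)-f_x(y))\,k_2(y,\dint v)$. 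Each of these converges as $\varepsilon\to0+$ by the assumed existence of $L_1(f^y)(x)$ and $L_2(f_x)(y)$, so the limit of the sum exists and equals $((L_1\oplus L_2)f)(x,y)$ by \eqref{OperSum}.

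For the truncated integrals to make sense, I will assume the same convention as in \eqref{defL}, namely that each truncated integral exists. By the splitting identity, this reduces exactly to the existence of the one-dimensional truncated integrals, and these are part of the hypothesis that $L_1(f^y)(x)$ and $L_2(f_x)(y)$ exist.
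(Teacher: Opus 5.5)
Your proposal is correct and follows the paper's route. The paper proves the same splitting identity for nonnegative integrands against $k_1\oplus k_2((x,y),\cdot)$ by a direct indicator/simple-function induction; your identification of this measure as a sum of two image measures, combined with the transformation formula, is a repackaging of that induction. The paper then applies the identity to $\Upsilon(f(u,v)-f(x,y))$ for (ii) and to the positive and negative parts of $f(u,v)-f(x,y)$ for (i), exactly as you do. For the singular case you use that $d_1+d_2$ restricted to the two axes through $(x,y)$ reduces to $d_1$ resp.\ $d_2$, so the $\varepsilon$-truncations split exactly; this makes explicit what the paper only sketches as ``similar, albeit more technical''.
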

Various versions of the first statement can already be found in the literature, see e.g. \cite{jansen_jump_processes} and \cite[Sec.\ 4]{weber_entropy_2021}. For the reader's convenience we provide a sketch of the proof in the appendix, where also an argument for (ii) is given.  

The following lemma is due to Weber and Zacher (\cite[Lemma 2.2]{weber_liyau_2023}) and a key ingredient of our proof of the lifting property of nonlocal Fisher information.
\begin{lemma}\label{main inequlity frederic}
Let $M$ be a metric space and $L$ an operator of the form \eqref{defL}. Let $H: M \times M \rightarrow (0,\infty)$ be such that $H(x,\cdot)$ is $\calB(M)-$measurable and the restriction $H|_{M \setminus \{x\} \times M}$ is $\calB(M \setminus \{x\}) \otimes \calB(M)-$measurable for every $x \in M$. Moreover, let $f:M \rightarrow (0,\infty)$ be $\calB(M)-$measurable. We assume that the integral 
\[
Pf(x) :=  \int_{M} H(x,y) f(y)\ \dint{\nu(y)}
\]
and also $\Psi_{\palme}^{k}(\log(Pf))$ exist for any $x \in M$ and that for $\nu-$a.e. $y \in M$, the expression $\Psi_{\palme}^{k}(\log(H(\cdot,y))(x)$ exists for every $x \in M$. Here $\nu:\, \calB(M) \rightarrow [0,\infty]$ is a $\sigma$-finite measure. Then, for every $x\in M$, we have 
\begin{align}
    \int_{M} \Psi_{\palme}^{k}(\log(H(\cdot,y)))(x)H(x,y)f(y)\ \dint{\nu(y)} \geq \Psi_{\palme}^{k}(\log(Pf))(x) Pf(x). \label{Main inequality}
\end{align}
\end{lemma}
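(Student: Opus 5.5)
The inequality follows from a pointwise convexity statement, integrated against the kernel. Fix $x\in M$ and write $Pf(x)=\int_M H(x,y)f(y)\,\dint{\nu(y)}$. The first step is to unfold the integrand on the right-hand side:
\[
Pf(x)\,\Upsilon\big(\log Pf(z)-\log Pf(x)\big)=Pf(z)-Pf(x)-Pf(x)\log\frac{Pf(z)}{Pf(x)},\quad z\neq x .
\]
This is a one-line computation with $\Upsilon(r)=e^r-1-r$. Similarly, for fixed $y$,
\[
H(x,y)\,\Upsilon\big(\log H(z,y)-\log H(x,y)\big)=H(z,y)-H(x,y)-H(x,y)\log\frac{H(z,y)}{H(x,y)}.
\]
So with $\phi(a,b):=b-a-a\log(b/a)$ for $a,b>0$, both sides of \eqref{Main inequality} are $k(x,\dint{z})$-integrals. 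The right-hand side is $\phi(Pf(x),Pf(z))$. The left-hand side is $\int_M\phi(H(x,y),H(z,y))f(y)\,\dint{\nu(y)}$, integrated in $z$, after exchanging the $y$- and $z$-integrations.

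The key point is that $\phi$ is the perspective of a convex function, namely $\phi(a,b)=a\,\Upsilon(\log(b/a))=a\,g(b/a)$ with $g(t)=t-1-\log t$ convex on $(0,\infty)$. Perspectives of convex functions are jointly convex and positively $1$-homogeneous, hence sublinear. Jensen's inequality for such functions, applied to the measure $f(y)\,\dint{\nu(y)}$, gives
\[
\int_M \phi\big(H(x,y),H(z,y)\big) f(y)\,\dint{\nu(y)} \;\ge\; \phi\Big(\int_M H(x,y)f(y)\,\dint{\nu},\ \int_M H(z,y)f(y)\,\dint{\nu}\Big)=\phi\big(Pf(x),Pf(z)\big).
\]
To justify Jensen for a possibly infinite, $\sigma$-finite measure, I would use the supporting-line representation $\phi(a,b)=\sup\{\alpha a+\beta b\}$ over the affine minorants through the origin, which exist by homogeneity. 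Concretely, $\phi(a,b)\ge (1-\log t_0)a$-type bounds; explicitly, for each $t_0>0$, $\phi(a,b)\ge a\,[g(t_0)+g'(t_0)(b/a-t_0)]$. One takes $t_0=Pf(z)/Pf(x)$ and integrates, which gives equality on the right.

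Finally I integrate the pointwise inequality against $k(x,\dint{z})$ over $M\setminus\{x\}$ and use Tonelli, since everything is nonnegative. This requires the joint measurability of $(z,y)\mapsto H(z,y)$ on $(M\setminus\{x\})\times M$, which is exactly the measurability hypothesis. The step I expect to be the main obstacle is the singular case, where $\Psi_\Upsilon$ is defined as $\lim_{\varepsilon\to0}\int_{M\setminus B_\varepsilon(x)}$. Here I would perform the argument on $M\setminus B_\varepsilon(x)$ for each $\varepsilon$, where Tonelli applies directly. I would then note that the integrands are nonnegative, so the $\varepsilon$-truncated integrals increase as $\varepsilon$ decreases. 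Monotone convergence then lets me pass $\varepsilon\to0$ on both sides, including under the $y$-integral, and the existence assumptions guarantee that both limits are the expressions in \eqref{Main inequality}.
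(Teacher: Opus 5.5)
The paper does not prove this lemma; it quotes it from \cite[Lemma 2.2]{weber_liyau_2023} and describes it only as a convexity-type inequality for $u\mapsto u\,\Psi_{\palme}(\log u)$. Your argument is a correct, self-contained proof of exactly that kind. Write $u(x)\Psi_{\palme}(\log u)(x)=\int_{M\setminus\{x\}}\phi(u(x),u(z))\,k(x,\dint{z})$ with the perspective $\phi(a,b)=a\,g(b/a)$, $g(t)=t-1-\log t$. This shows the functional is sublinear in $u$, and \eqref{Main inequality} is Jensen's inequality for it, applied to the family $H(\cdot,y)$ with weights $f(y)\,\dint{\nu(y)}$.

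A few small comments:
\begin{itemize}
\item Your first ``$(1-\log t_0)a$-type'' bound is garbled, but the explicit tangent-line minorant is right. It reads
\[
\phi(a,b)\ \ge\ -a\log t_0+\bigl(1-t_0^{-1}\bigr)\,b,\qquad a,b,t_0>0.
\]
With $t_0=Pf(z)/Pf(x)$ it needs only $0<Pf(x),Pf(z)<\infty$, which is implicit in the existence of $\log Pf$. It is equivalent to applying Jensen to $-\log$ with respect to the probability measure $H(x,y)f(y)\,\dint{\nu(y)}/Pf(x)$, i.e.\ the log-sum inequality.
\item Keeping $\phi\ge 0$ intact, rather than splitting off the linear part $b-a$ before integrating in $z$, is what makes your Tonelli step legitimate. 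For singular kernels, $\int_{M\setminus\{x\}}(Pf(z)-Pf(x))\,k(x,\dint{z})$ need not converge absolutely.
\item The singular case is not a real obstacle. Because $\palme\ge 0$, the truncated integrals increase to the integral over $M\setminus\{x\}$ by monotone convergence, so $\Psi_{\palme}$ never requires a principal value. Your own monotone-convergence remark already settles it.
\end{itemize}
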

We are now in a position to state and prove our main result concerning the natural lifting of nonlocal Fisher information.
\begin{theorem} \label{ThmLiftingNonlocalFisher}
Let $(M,d)$ be a metric space, $\mu$ a $\sigma$-finite measure on $\calB(M)$, and $k$ a kernel on $M$ which is the countable sum of finite kernels. Then a lifting of the nonlocal Fisher information
\begin{equation} \label{NonlocFish2}
i_{k}(f) = \int_{M} f(x) \Psi_{\palme}^{k}(\log f)(x)\ \dint{\mu(x)}
\end{equation}
is given by 
\[
i_{k \oplus k}(F)=\int_{M\times M}F(x,y)\Psi_\Upsilon^{k\oplus k}(\log F)(x,y)\,\dint{(\mu\otimes\mu)(x,y)}. 
\]
\end{theorem}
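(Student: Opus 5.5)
We verify the two conditions of Definition \ref{LiftingGenDef} with $j=i_k$ and $J=i_{k\oplus k}$. Since $k$ is a countable sum of finite kernels, Remark \ref{KernProp}(i) guarantees that $k\oplus k$ is a genuine (measurable) kernel on $M\times M$, and so the $\Psi_\Upsilon$-expressions are measurable. The main computational tool is Proposition \ref{Sumformulas}(ii), which splits
\[
\Psi_\Upsilon^{k\oplus k}(\log F)(x,y)=\Psi_\Upsilon^{k}\big(\log F(\cdot,y)\big)(x)+\Psi_\Upsilon^{k}\big(\log F(x,\cdot)\big)(y).
\]

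\emph{Condition (i).} Take $F=f\otimes f$, so $\log F(x,y)=\log f(x)+\log f(y)$. The first summand is $\Psi_\Upsilon^k(\log f)(x)$, because $\Psi_\Upsilon$ only sees differences $\log F(u,y)-\log F(x,y)=\log f(u)-\log f(x)$; likewise the second summand is $\Psi_\Upsilon^k(\log f)(y)$. Integrating against $f(x)f(y)\,\dint{\mu(x)}\dint{\mu(y)}$ and using Tonelli (all integrands are nonnegative) together with $\int f\,\dint\mu=1$ yields $i_{k\oplus k}(f\otimes f)=i_k(f)+i_k(f)=2i_k(f)$. This holds in $[0,\infty]$, so no finiteness is needed.

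\emph{Condition (ii).} Let $F$ be symmetric and set $g=\Pi_\mu F$. By the splitting and Tonelli,
\[
i_{k\oplus k}(F)=\int_M\!\int_M \Psi_\Upsilon^k(\log F(\cdot,y))(x)F(x,y)\,\dint{\mu(y)}\,\dint{\mu(x)}+\int_M\!\int_M \Psi_\Upsilon^k(\log F(x,\cdot))(y)F(x,y)\,\dint{\mu(x)}\,\dint{\mu(y)}.
\]
By symmetry of $F$, the two terms coincide. For the first term, apply Lemma \ref{main inequlity frederic} with $H=F$, the function $f\equiv 1$ and $\nu=\mu$, so that $Pf=\Pi_\mu F=g$. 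Pointwise in $x$ this gives
\[
\int_M \Psi_\Upsilon^k(\log F(\cdot,y))(x)F(x,y)\,\dint{\mu(y)}\ge \Psi_\Upsilon^k(\log g)(x)\,g(x).
\]
Integrating in $x$ shows that the first term is at least $i_k(g)$, hence $i_{k\oplus k}(F)\ge 2i_k(\Pi_\mu F)$.

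\emph{Main obstacle.} The delicate step is checking the hypotheses of Lemma \ref{main inequlity frederic}. These are: strict positivity of $F$ and $g$, which I take as implicit in the domains, since $\log F$ must be defined; the measurability of $F$ restricted to $(M\setminus\{x\})\times M$, which follows from $\calB(M\times M)=\calB(M)\otimes\calB(M)$; and the existence of $\Psi_\Upsilon^k(\log g)$ and $\Psi_\Upsilon^k(\log F(\cdot,y))$.

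Because $\Upsilon\ge0$, these $\Psi_\Upsilon$-integrals are always defined in $[0,\infty]$, provided no principal-value interpretation is needed. I would therefore define $D(i_k)$ and $D(i_{k\oplus k})$ as strictly positive densities for which the relevant quantities are measurable. I would then note that the inequality is trivial whenever its left side is $+\infty$.

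If the lemma's proof requires finiteness, I would recheck it in the $[0,\infty]$-valued setting. Its proof is a Jensen/convexity argument for the jointly convex map $(a,b)\mapsto b\,\Upsilon(\log a-\log b)$, that is, $a-b-b\log(a/b)$, and that argument survives when values may be infinite.
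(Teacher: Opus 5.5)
Your proposal is correct and follows the paper's proof essentially step by step. For (i) you use the splitting from Proposition \ref{Sumformulas}(ii) together with Tonelli and $\int_M f\,\dint{\mu}=1$. For (ii) you apply Lemma \ref{main inequlity frederic} with $H=F$, $f\equiv 1$, $\nu=\mu$, use the symmetry of $F$ to identify the two halves, and integrate in $x$. The only difference is technical: you treat the $\Psi_\Upsilon$-terms as $[0,\infty]$-valued, whereas the paper assumes $i_{k\oplus k}(F)<\infty$ and $i_k(\Pi_\mu F)<\infty$ to ensure a.e. existence. This is a harmless and slightly cleaner variation.
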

\begin{proof}
First, let us show that for any probability density $f:\,M \rightarrow (0,\infty)$ with $i_{k}(f) < \infty$, we have that $i_{k \oplus k}(f \otimes f) = 2 i_{k}(f)$.

By Proposition \ref{Sumformulas} (ii),
\begin{align}
\Psi_{\palme}^{k \oplus k}& \big(\log(f \otimes f)\big)(x,y) = \Psi_{\palme}^{k}(\log(f \otimes f)^{y})(x) + \Psi_{\palme}^{k}(\log(f \otimes f)_{x})(y) \notag \\
& = \int_{\mengdiff{M}{\{x\}}} \palme \big(\log(f(u) f(y)) - \log(f(x) f(y))\big) \ k(x,\dint{u})\notag \\
& \quad + \int_{\mengdiff{M}{\{y\}} }\palme \big(\log(f(x) f(v)) - \log(f(x) f(y))\big) \ k(y,\dint{v})  \notag \\
& = \Psi_{\palme}^{k}(\log f)(x) + \Psi_{\palme}^{k}(\log f)(y). \label{Tensorprop.}
\end{align}
Therefore,
\begin{align}
&i_{k \oplus k}(f \otimes f) = \int_{M\times M}f(x) f(y) \, \Psi_{\palme}^{k \oplus k}(\log(f \otimes f))(x,y) \ \dint{(\mu \otimes \mu)(x,y)} \notag \\
&\overset{\eqref{Tensorprop.}}{=}\int_{M\times M} f(x) f(y)\, \big(\Psi_{\palme}^{k}(\log f)(x) + \Psi_{\palme}^{k}(\log f)(y)\big)   \ \dint{(\mu \otimes \mu)(x,y)} \notag \\
&\overset{\text{Tonelli}}{=} \int_{M} f(y) \int_{M} f(x)\, \Psi_{\palme}^k(\log f)(x) \ \dint{\mu(x)}\, \dint{\mu(y)} \notag \\
&\quad\quad + \int_{M} f(x) \int_{M} f(y)\, \Psi_{\palme}^k(\log f)(y) \ \dint{\mu(y)}\, \dint{\mu(x)} \notag \\
& =   2 \,i_{k}(f).
\end{align}

Next, consider a symmetric function $F:\,M \times M  \rightarrow (0,\infty)$ satisfying $i_{k \oplus k}(F)< \infty$ and $i_{k}(\Pi_\mu F)<\infty$. This in particular implies, that for $\mu$-a.e.\ $x \in M$ and $\mu$-a.e.\ $y \in M$, the terms $\Psi_{\palme}^{k}(\log(\Pi_\mu F))(x)$ and $\Psi_{\palme}^{k}(\log(F(\cdot,y))(x)$ exist. Applying Lemma \ref{main inequlity frederic} with $f = 1$ and $H(x,y) = F(x,y)$ we see that 
$$  \int_{M} \Psi_{\palme}^k(\log(F(\cdot,y)))(x) \, F(x,y) \ \dint{\mu(y)} \geq \Psi_{\palme}^k(\log(\Pi_\mu F))(x) \, \Pi_\mu F(x),$$
for a.a.\ $x \in M$. Thus, a further application of Proposition \ref{Sumformulas} yields
\begin{align*}
&i_{k \oplus k}(F) = \int_{M \times M} F(x,y) \, \Psi_{\palme}^{k \oplus k}(\log F)(x,y) \ \dint{(\mu \otimes \mu)(x,y)} \notag \\
& = \int_{M \times M} F(x,y) \, \big(\Psi_{\palme}^{k }((\log F)^{y})(x) + \Psi_{\palme}^{k}((\log F)_{x})(y)\big)\ \dint{(\mu \otimes \mu)(x,y)} \notag \\
& =\int_{M} \int_{M} F(x,y) \cdot  \Psi_{\palme}^{k}(\log F(\cdot,y))(x) \ \dint{\mu(y)}\, \dint{\mu(x)} \notag \\
&\quad +\int_{M} \int_{M} F(x,y) \,  \Psi_{\palme}^{k}(\log F(x,\cdot))(y) \ \dint{\mu(x)}\, \dint{\mu(y)} \notag \\
&\geq 2 \,  \int_{M} \Psi_{\palme}^{k}(\log(\Pi_\mu F))(x) \, \Pi_\mu F(x)\, \dint{\mu(x)} = 2 \, i_{k}(\Pi_\mu F),
\end{align*} 
where we also make use of the symmetry of $F$. This concludes the proof of the theorem.
\end{proof}
\begin{remark}
{\em
The assumption that the kernel $k$ is a countable sum of finite kernels ensures that
$k\oplus k$ is a kernel on $M\times M$ (again given by a countable sum of finite kernels), see
Remark \ref{KernProp} (i). Moreover, it guarantees the measurability of 
$\Psi_{\palme}^{k}(\log f)$ and 
$\Psi_{\palme}^{k\oplus k}(\log F)$ for measurable functions $f:\,M\to (0,\infty)$ and
$F:\,M\times M\to (0,\infty)$, cf.\ \cite[Lemma 3.2]{kallenberg_foundations_2021}.
}
\end{remark}
\section{Fisher information and $\Gamma$-calculus}\label{Sec3}
\noindent The aim of this section is to study the existence of liftings of the Fisher information in the setting of so-called {\em full Markov triples}, as introduced in \cite{bakry_analysis_2014}. We do not present the full details of this general framework, but instead restrict ourselves to recalling only those elements that are essential for our purposes. 

A full Markov triple consists of a measure space $(E,\calF,\mu)$ with $\sigma$-finite $\mu$, a suitable algebra $\calA$ of real-valued measurable functions on $E$ and an operator $L$ which generates a Markov semigroup
$(P_t)_{t\ge 0}$ of functions on $E$. The {\em carr\'e du champ} operator $\Gamma$ associated with $L$ is defined on $\calA \times \calA$ by 
\[
\Gamma(f,g) = \frac{1}{2}\,\big(L(f g) - g Lf - f  Lg\big), 
\]
and we set $\Gamma(f):=\Gamma(f,f)$, also writing $\Gamma^L$ instead of $\Gamma$ to emphasize the dependence on $L$. Moreover, the algebra $\calA$ is stable under composition with smooth functions $\Phi: \bbR^{k} \rightarrow \bbR$, and $L$ and $\Gamma$ satisfy the {\em diffusion property} 
\begin{align}
& L(\Phi(f_{1},\ldots,f_{k})) =  \sum_{i = 1}^{k} \partial_{i} \Phi(f_{1},\ldots,f_{k}) L(f_{i}) + \sum_{i,j = 1}^{k} \partial_{i}\partial_{j}\Phi(f_{1},\ldots,f_{k}) \Gamma(f_{i},f_{j}),\label{Diffusion property L}
\end{align}
respectively,
\begin{align} 
& \Gamma(\Phi(f_{1},\ldots,f_{k}),g)
 = \sum_{i = 1}^{k} \partial_{i}\Phi(f_{1},\ldots,f_{k}) \Gamma(f_{i},g),\label{Diffusion property Gamma}
 \end{align}
 for all smooth $ \Phi: \bbR^{k} \rightarrow \bbR $ and all $ f_{1},\ldots,f_{k},g \in \calA$. Formally, the diffusion property reflects that $L$ behaves like a second-order differential operator.

The Fisher information associated with $L$ is defined by 
\begin{equation} \label{FishL}
i_{L}(f) = \int_E f\, \Gamma^L\big(\log f\big)\,\dint{\mu}= \int_{E} \frac{\Gamma^{L}(f)}{f} \ \dint{\mu}, 
\end{equation}
see \cite[p.\ 237]{bakry_analysis_2014}. The classical Fisher information \eqref{FishCla} is recovered by choosing $L=\Delta$ on $E=\bbR^d$ with $\mu$ equal to the Lebesgue measure.

We have already seen in Subsection \ref{SubSec1.1} that a natural lifting of the classical Fisher information is given by \begin{align*}
I(F) = \int_{\bbR^{2d}} \frac{|\nabla F(x,y)|^{2}}{F(x,y)} \ \dint{(x,y)} = \int_{\bbR^{2d}} \frac{|\nabla_{x}F(x,y)|^{2}}{F(x,y)} + \frac{|\nabla_{y}F(x,y)|^{2}}{F(x,y)}\ \dint{(x,y)}.
\end{align*}
Using the notation introduced before Proposition \ref{Sumformulas}, we can rewrite
$I(F)$ as
\begin{align*}I(F)  
& = \int_{\bbR^{2d}} \Big( \frac{\Gamma^{\Delta}(F^y)(x)}{F(x,y)} + \frac{\Gamma^{\Delta}(F_x)(y)}{F(x,y)}\Big) \ \dint{(x,y)}
= \int_{\bbR^{2d}} \frac{\Gamma^{\Delta \oplus \Delta}(F)(x,y)}{F(x,y)}\ \dint{(x,y)}\\ & = \int_{\bbR^{2d}} F(x,y)\,{\Gamma^{\Delta \oplus \Delta}(\log F)(x,y)}\ \dint{(x,y)} =i_{\Delta \oplus \Delta}(F),
\end{align*}
where the notation $L\oplus L$ has to be understood as in \eqref{OperSum}.

This suggests that, in the general 
full Markov triple setting, a natural candidate for a lifting of the Fisher information \eqref{FishL}
is given by
\begin{equation} \label{iLL}
i_{L \oplus L}(F) = \int_{E \times E} F\,\Gamma^{L \oplus L}(\log F) \ \dint{(\mu \otimes \mu)}=\int_{E \times E} \frac{\Gamma^{L \oplus L}(F)}{F}\ \dint{(\mu \otimes \mu)}.
\end{equation}
Here, the last equality follows from the fact that, with $\Gamma^L$, also $\Gamma^{L \oplus L}$ satisfies the diffusion property. This is a consequence of the subsequent proposition, see Remark \ref{LLdiff}. 

We show that
$i_{L \oplus L}$ satisfies condition (i) of Definition \ref{LiftingGenDef}. That is, for every admissible function $f$, we have
$i_{L \oplus L}(f \otimes f) = 2 \, i_{L}(f)$. To this end, we first establish the following proposition.
\begin{proposition}\label{TensorpropGamma}
Let $L$ be the operator associated with a full Markov triple. Then, for all suitable functions $F,\,G:\,E \times E\rightarrow \bbR$, we have for $x,y\in E$,
\begin{align} \label{GammaSumProp}
		\Gamma^{L \oplus L}(F,G)(x,y) = \Gamma^{L}(F^y,G^y)(x) + \Gamma^{L}(F_x,G_x)(y).
	\end{align}
\end{proposition}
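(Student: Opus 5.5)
We argue directly from the definition of the carré du champ together with the definition \eqref{OperSum} of $L\oplus L$. For suitable $F,G:E\times E\to\bbR$ we have
\[
\Gamma^{L\oplus L}(F,G)(x,y)=\tfrac12\Big((L\oplus L)(FG)-G\,(L\oplus L)F-F\,(L\oplus L)G\Big)(x,y).
\]
By \eqref{OperSum}, each application of $L\oplus L$ splits into an $x$-part acting on the section $(\cdot)^y$ and a $y$-part acting on the section $(\cdot)_x$. The key observation is that taking sections commutes with pointwise multiplication: $(FG)^y=F^yG^y$ and $(FG)_x=F_xG_x$. Hence
\[
(L\oplus L)(FG)(x,y)=L(F^yG^y)(x)+L(F_xG_x)(y).
\]

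We then evaluate the remaining terms at the point $(x,y)$. Using $G(x,y)=G^y(x)=G_x(y)$, and similarly for $F$, we get
\[
G\,(L\oplus L)F(x,y)=G^y(x)\,L(F^y)(x)+G_x(y)\,L(F_x)(y),
\]
and the analogous identity for $F\,(L\oplus L)G$. Grouping the $x$-terms and the $y$-terms gives
\[
\tfrac12\big(L(F^yG^y)-G^yLF^y-F^yLG^y\big)(x)+\tfrac12\big(L(F_xG_x)-G_xLF_x-F_xLG_x\big)(y),
\]
which equals $\Gamma^L(F^y,G^y)(x)+\Gamma^L(F_x,G_x)(y)$. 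This is \eqref{GammaSumProp}.

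The only delicate point is the meaning of ``suitable'': $F$, $G$ and $FG$ must lie in the domain of $L\oplus L$, which requires every section $F^y$, $F_x$, $G^y$, $G_x$ to belong to $\calA$. Since $\calA$ is an algebra, the products $F^yG^y$ and $F_xG_x$ then also lie in $\calA$, so all terms above are defined. We therefore fix this as the class of suitable functions, namely measurable functions on $E\times E$ all of whose sections lie in $\calA$. With this convention the computation above is purely algebraic and needs no further justification.
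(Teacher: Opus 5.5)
Your proof is correct and is essentially the paper's argument: both rest on $(FG)^y=F^yG^y$, $(FG)_x=F_xG_x$ and the definition of $\Gamma$. The only difference is bookkeeping. The paper first expands $L(F^yG^y)(x)$ via the product rule with a $2\Gamma^L(F^y,G^y)(x)$ term and then cancels. You regroup the $x$- and $y$-terms directly into the definition of $\Gamma^L$, which also makes explicit what ``suitable'' means.
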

\begin{proof}
	By definition,
    \begin{align} \Gamma^{L \oplus L}(F,G)(x,y) = \frac{1}{2}\big[& (L\oplus L)(F \, G)(x,y)-((L \oplus L)F)(x,y) G(x,y)  \notag \\ & - F(x,y) ((L\oplus L)G)(x,y)\big]. \label{Gammatensor}\end{align}
	Using the definition of $L \oplus L$ and $\Gamma^L$, we see that 
\begin{align}
		& (L \oplus L)(F \, G)(x,y) = L(F^y\, G^y)(x) + L(F_x\,G_x)(y) \notag \\
		&\quad = G(x,y) L(F^y)(x) + F(x,y) L(G^y)(x) + 2\Gamma^{L}(F^y,G^y)(x) \notag \\
		&\quad\quad + G(x,y) L(F_x)(y) + F(x,y) L(G_x)(y)+2\Gamma^L(F_x,G_x)(y) \label{L+Lid}.
    \end{align}
		Moreover, 
\begin{align}
		 ((L \oplus L)G)(x,y) & = L(G^y)(x) + L(G_x)(y) \label{L+LG}\\
		((L \oplus L)F)(x,y)& = L(F^y)(x) + L(F_x)(y) \label{L+LF}.
\end{align}
Combining \eqref{Gammatensor},\eqref{L+Lid}, \eqref{L+LG}, and \eqref{L+LF} yields the assertion \eqref{GammaSumProp}. 
\end{proof}
\begin{remark} \label{LLdiff}
{\em The carr\'e du champ operator $\Gamma^{L \oplus L}$ also satisfies the diffusion property. For the sake of simplicity, let us restrict to the case $k=1$ in \eqref{Diffusion property Gamma}.
For a smooth function $\Phi:\,\bbR\to \bbR$ and suitable $F,G:\,E\times E\to \bbR$, applying Proposition \ref{TensorpropGamma} twice and using the diffusion property of $\Gamma^L$, we have
\begin{align*}
\Gamma^{L \oplus L}&(\Phi(F),G)(x,y)=\Gamma^L(\Phi(F^y),G^y)(x)+\Gamma^L(\Phi(F_x),G_x)(y) \\
&=\Phi'(F^y)(x)\Gamma^L(F^y,G^y)(x)
+\Phi'(F_x)(y)\Gamma^L(F_x,G_x)(y)\\
&=\Phi'(F(x,y))\big(\Gamma^L(F^y,G^y)(x)+
\Gamma^L(F_x,G_x)(y)\big)\\
&=\Phi'(F(x,y))\,\Gamma^{L \oplus L}(F,G)(x,y).
\end{align*}
}    
\end{remark}

With Proposition \ref{TensorpropGamma} at hand, it is not difficult to show the following theorem.
\begin{theorem}\label{Tensor Fisher Gamma}
Consider a full Markov triple with Markov generator $L$, and let $i_L$ and $i_{L\oplus L}$ be defined by \eqref{FishL} and \eqref{iLL}, respectively. Then, for every admissible probability density $f$ on $E$,
$$  
i_{L\oplus L}(f \otimes f)= 2 \, i_{L}(f).
$$
\end{theorem}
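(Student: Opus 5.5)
The plan is to mimic the first half of the proof of Theorem \ref{ThmLiftingNonlocalFisher}, with Proposition \ref{TensorpropGamma} playing the role that Proposition \ref{Sumformulas} (ii) played there. I will work with the representation $i_{L\oplus L}(F)=\int F\,\Gamma^{L\oplus L}(\log F)\,\dint{(\mu\otimes\mu)}$ from \eqref{iLL}. The first step is to compute $\Gamma^{L\oplus L}(\log(f\otimes f))$ pointwise.

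Put $F=\log(f\otimes f)$, so that $F(x,y)=\log f(x)+\log f(y)$. Then $F^y=\log f+\log f(y)$ as a function of $x$, and $F_x=\log f(x)+\log f$ as a function of $y$. Proposition \ref{TensorpropGamma} with $G=F$ gives
\[
\Gamma^{L\oplus L}(\log(f\otimes f))(x,y)=\Gamma^L(\log f+c_y)(x)+\Gamma^L(\log f+c_x)(y),
\]
where $c_y=\log f(y)$ and $c_x=\log f(x)$ are constants. The key observation is that $\Gamma^L$ does not see additive constants. Applying the diffusion property \eqref{Diffusion property Gamma} with $\Phi(r)=r+c$ yields $\Gamma^L(g+c,h)=\Gamma^L(g,h)$, and hence, by symmetry and a second application, $\Gamma^L(g+c)=\Gamma^L(g)$. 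Alternatively, one can note that $L1=0$ for a Markov generator, so $\Gamma(1,g)=0$, and then use bilinearity. Either way we obtain
\[
\Gamma^{L\oplus L}(\log(f\otimes f))(x,y)=\Gamma^L(\log f)(x)+\Gamma^L(\log f)(y).
\]

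Next I multiply by $f(x)f(y)$ and integrate over $E\times E$ against $\mu\otimes\mu$. All integrands are nonnegative, so Tonelli splits the integral into two iterated integrals. In each of them one factor integrates to $\int f\,\dint{\mu}=1$, and the other is $\int f\,\Gamma^L(\log f)\,\dint{\mu}=i_L(f)$. This gives $2\,i_L(f)$, exactly as in the computation leading to \eqref{Tensorprop.}.

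The only point needing care is admissibility. One must check that $\log f$, $\log(f\otimes f)$ and their sections lie in the algebra on which Proposition \ref{TensorpropGamma} and the diffusion property apply, and that constants belong to $\calA$. I would state this as part of the meaning of ``admissible'' rather than prove it. The invariance of $\Gamma$ under adding constants is the one genuinely new ingredient, and I expect it to be immediate from \eqref{Diffusion property Gamma}.
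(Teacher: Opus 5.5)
Your proof is correct and is essentially the paper's argument: split $\Gamma^{L\oplus L}$ of the tensor product via Proposition \ref{TensorpropGamma}, remove the dependence on the frozen variable, and conclude with Tonelli and $\int_E f\,\dint{\mu}=1$. The only difference is cosmetic. The paper works with $\Gamma^{L\oplus L}(F)/F$ and pulls out the frozen factor by bilinearity, $\Gamma^L(f(x)f)=f(x)^2\,\Gamma^L(f)$, whereas you work with $F\,\Gamma^{L\oplus L}(\log F)$ and use $\Gamma^L(g+c)=\Gamma^L(g)$, which has the small advantage of not relying on the second equality in \eqref{iLL} (i.e.\ on Remark \ref{LLdiff}).
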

\begin{proof}
Let $f$ be an admissible probability density on $E$. Then, by using Proposition \ref{TensorpropGamma} and the bilinearity of the carr\'e du champ operator, we have
\begin{align}
& \Gamma^{L\oplus L}(f \otimes f)(x,y) =  \Gamma^{L}((f\otimes f)_{x})(y) + \Gamma^{L}((f \otimes f)^{y})(x) \notag \\
&\quad= \Gamma^{L}\big(f(x)\, f(\cdot)\big)(y) + \Gamma^{L}\big(f(\cdot) \,f(y)\big)(x) \notag \\
&\quad=  f(x)^{2} \, \Gamma^{L}(f)(y) + f(y)^{2}
\,  \Gamma^{L}(f)(x). \notag
\end{align}
Combined with an application of Tonelli’s theorem, this yields
\begin{align}
i_{L \oplus L}&(f \otimes f) =   \int_{E} \int_{E} \frac{\Gamma^{L \oplus L}(f \otimes f)(x,y)}{(f \otimes f)(x,y)}\ \dint{\mu(x)} \ \dint{\mu(y)}  \notag \\
&= \int_{E} \int_{E}  \Big(
\frac{f(x)^{2} \, \Gamma^{L}(f)(y)}{f(x)f(y)} + \frac{f(y)^{2}
\,  \Gamma^{L}(f)(x)}{f(x)f(y)}
 \Big)\ \dint{\mu(y)} \ \dint{\mu(x)}\notag \\
&= 2 \, i_{L}(f),
\notag 
\end{align}
where we also use that $\int_E f\,\dint{\mu}=1$.
\end{proof}

Having established that $i_{L \oplus L}$ satisfies condition (i) in the definition of a lifting (of $i_L$), cf.\ Definition \ref{LiftingGenDef}, we now turn to condition (ii). This condition requires that
\begin{equation} \label{openproblem}
i_{L\oplus L}(F)\ge 2 \, i_{L}(\Pi_\mu F)
\end{equation}
holds for all admissible symmetric probability densities $F$ on $E\times E$.  
Without additional assumptions, it is not clear whether \eqref{openproblem} holds in general.

In the case of the classical Fisher information
\eqref{FishCla}, inequality \eqref{openproblem}
follows from the pointwise inequality
\begin{equation} \label{gradungleichung}
\int_{\bbR^d}\frac{\big|\nabla_x F(x,y))\big|^2}{ F(x,y)} \,\dint{y}\ge \frac{\big|\nabla\, \Pi F(x))\big|^2}{ \Pi F(x)},\;\; x\in \bbR^d,
\end{equation}    
where $\Pi F(x)=\int_{\bbR^d}F(x,y)\,\dint{y}$.
Observe that \eqref{gradungleichung} is a special case of \eqref{gradientIE}.
The proof of \eqref{gradungleichung} relies basically only on the interchange of differentiation and integration, together with Hölder’s inequality. For the reader’s convenience, we briefly recall the argument.
\begin{align}
|\nabla_{x}\,\Pi F(x)|^{2} &= \left|\int_{\bbR^d} \nabla_{x} F(x,y) \ \dint{y}\right|^{2}  \le \left(\int_{\bbR^d} \frac{|\nabla_{x} F(x,y)|}{\sqrt{F(x,y)}} \, \sqrt{F(x,y)}\ \dint{y}\right)^{2} \notag \\
&\leq  \int_{\bbR^d} \frac{|\nabla_{x}F(x,y)|^{2}}{F(x,y)} \ \dint{y} \,\,\Pi F(x).\notag 
\end{align}
Dividing by $\Pi F(x)$ yields \eqref{gradungleichung}.

Observe that the proof trivially extends to domains $\Omega\subset \bbR^d$ and general measures $\mu$, that is, we also have
\begin{equation} \label{gradungleichung2}
\int_{\Omega}\frac{\big|\nabla_x F(x,y))\big|^2}{ F(x,y)} \,\dint{\mu(y)}\ge \frac{\big|\nabla\, \Pi_\mu F(x))\big|^2}{ \Pi_\mu F(x)},\;\; x\in \bbR^d,
\end{equation} 
with $\Pi_\mu F(x)=\int_{\Omega}F(x,y)\,\dint{\mu(y)}$.
This allows one to establish the lifting property of the Fisher information $i_L$ for more general second-order operators. We illustrate this with two examples.
The corresponding operators and their associated semigroups can be found in  \cite{bakry_analysis_2014}.
\begin{example} [Laguerre semigroup]
{\em
Let $\alpha >0$. Consider the Laguerre operator $L_{\alpha}$ acting on smooth functions $f $ on $\bbR_{+}$ via $$(L_{\alpha }f)(x) = x f^{\prime \prime}(x) + (x-\alpha)f^{\prime}(x). $$
The carr\'e du champ operator associated with $L_{\alpha}$ is given by $\Gamma(f)(x) = x (f^{\prime}(x))^{2}$.
The corresponding invariant and reversible probability measure is given by $$  \dint{\mu_{\alpha}} = \gamma_{\alpha}^{-1} x^{\alpha -1} e^{-x}  \ \dint{x},$$
where $\gamma_\alpha$ denotes the value of the Gamma function at $\alpha$.
See \cite[Sec.\ 2.7.3]{bakry_analysis_2014}.

Let us now consider the associated Fisher information $$ i_{L_{\alpha}}(f) = \int_{\bbR_{+}} x \, \frac{(f^{\prime}(x))^{2}}{f(x)}\ \dint{\mu_{\alpha}(x)}.$$
As discussed above, a natural candidate for a lifting of $i_{L_{\alpha}}$ is given by $i_{L_{\alpha} \oplus L_{\alpha}}$. In view of Theorem \ref{Tensor Fisher Gamma}, it remains to establish \eqref{openproblem} in order to conclude that $i_{L_{\alpha} \oplus L_{\alpha}}$ is indeed a lifting of $i_{L_{\alpha}}$.
For an admissible symmetric probability density $F:\,\bbR_{+}^{2} \rightarrow \bbR$ with respect to $\mu_\alpha\otimes \mu_\alpha$, we have, using Proposition \ref{TensorpropGamma}, \begin{align*}
 &i_{L_{\alpha} \oplus L_{\alpha}}(F) =   \int_{\bbR_{+}} \int_{\bbR_{+}} \frac{\Gamma^{L_{\alpha}}(F^y)(x) + \Gamma^{L_{\alpha}}(F_x)(y)}{F(x,y)}\ \dint{\mu_{\alpha}(x)}\ \dint{\mu_{\alpha}(y)}\notag \\
&\quad = \int_{\bbR_{+}} \int_{\bbR_{+}} \Big(\frac{x\,  (\partial_{x}F(x,y))^{2}}{F(x,y)} + \frac{y\, (\partial_{y}F(x,y))^{2}}{F(x,y)}\Big)\ \dint{\mu_{\alpha}(x)} \ \dint{\mu_{\alpha}(y)} \\
&\quad= \int_{\bbR_{+}} x \int_{\bbR_{+}} \frac{(\partial_{x}F(x,y))^{2}}{F(x,y)} \  \dint{\mu_{\alpha}(y)}\  \dint{\mu_{\alpha}(x)}\\
&\quad\quad\quad+  \int_{\bbR_{+}} y \int_{\bbR_{+}} \frac{(\partial_{y}F(x,y))^{2}}{F(x,y)} \  \dint{\mu_{\alpha}(x)}\  
\dint{\mu_{\alpha}(y)} \\
& \quad\overset{\eqref{gradungleichung2}}{\geq}   \int_{\bbR_{+}} x \, \frac{(\partial_{x}\Pi_{\mu_{\alpha}}F(x))^{2}}{\Pi_{\mu_{\alpha}}F(x)} \, \dint{\mu_{\alpha}(x)} + \int_{\bbR_{+}} y \, \frac{(\partial_{y}\Pi_{\mu_{\alpha}}F(y))^{2}}{\Pi_{\mu_{\alpha}}F(y)}\,  \dint{\mu_{\alpha}(y)}\\
&\quad =2\,i_{L_{\alpha}}(\Pi_{\mu_\alpha}F).
\end{align*}
Hence, the desired property \eqref{openproblem} is indeed satisfied.
}
\end{example}
\begin{example}[Jacobi semigroup]
{\em Let $\alpha,\beta>0$.
	Consider the Jacobi operator acting on smooth functions $f$ on $[-1,1]$ given by $$L_{\alpha,\beta}(f)(x) = (1-x^{2})f^{\prime \prime}(x)  - [(\alpha  + \beta )x + \alpha - \beta]f^{\prime}(x). $$ The corresponding invariant and reversible probability measure is given by $$ \dint{\mu_{\alpha,\beta}(x)} = C_{\alpha,\beta} (1-x)^{\alpha-1}(1+x)^{\beta-1}\ \dint{x},$$
    with a suitable constant $C_{\alpha,\beta}$.
	The carr\'e du champ operator takes the form $$  \Gamma(f)(x) = (1-x^{2})(f^{\prime}(x))^{2}.$$
    See \cite[Sec.\ 2.7.4]{bakry_analysis_2014}.
	Similar calculations as in the previous example show that $i_{L_{\alpha,\beta} \oplus L_{\alpha,\beta}}$ is a lifting of $i_{L_{\alpha,\beta}}.$
    }
\end{example}
\section{Nonlocal-to-local limit of the fractional Fisher information}\label{Sec4}
\noindent In this and the following section we consider the nonlocal Fisher information $i_s\,(:=i_{k_s})$ associated
with the (negative) fractional Laplace operator $L=-(- \Delta)^s$ on $\bbR^d$ with $s\in (0,1)$. The corresponding kernel $k_s$ is given by \eqref{fractionalkernel}, and $i_s(f)$ has the form
\begin{align*}
 i_{s}(f) & = c(d,s) \int_{\bbR^{d}} \int_{\bbR^{d}} \frac{f(x) \palme(\log(f(x+h)) - \log(f(x)))}{|h|^{d+2s}} \ \dint{h}\, \dint{x}.
\end{align*}

It is well known that, for suitable $f$,
$$ \lim_{s\to 1} (-\Delta )^{s}f(x) = -\Delta f(x),\ x \in \bbR^{n},$$ see e.g. \cite[Theorem 12.4]{stinga_regularity_2024}. The following theorem provides a corresponding result for the fractional Fisher information $i_s$.
\begin{theorem} \label{NonlocalToLocal}
Let $f \in L^{1}(\bbR^{d})\cap C^{2}(\bbR^{d})$ satisfy the following properties: 
\begin{itemize} 
    \item[(i)] There exist constants $\beta \in (0,2)$, $c_{0} \in (0,1)$ and $\delta > 0$ such that \[
    f(x) \geq c_{0}e^{-\delta|x|^{\beta}},\quad x \in \bbR^{d}.
    \]
    \item[(ii)] There exist $R_{0}, c_{1},c_{2}> 0 $ such that 
    \[
    c_{1}f(x) \leq f(x+h) \leq c_{2} f(x),\quad \text{if}\;\,|x|\geq R_{0},|h|\leq 1.
    \]
    \item[(iii)] $\displaystyle \int_{\bbR^{d}} f(x) (|x|^{\beta} + | \log(f(x))|) \ \dint{x}< \infty.$
    \item[(iv)] $\nabla \sqrt{f} \in L^{2}(\bbR^{d}).$ 
\end{itemize}
Then for every $s \in \left( \frac{\beta}{2},1 \right)$, we have $i_{s}(f) < \infty$ and
\[
\lim_{s\to 1} i_{s}(f) = i(f).
\]
\end{theorem}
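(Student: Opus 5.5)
Split the inner $h$-integral of $i_s(f)$ into $|h|\le 1$ and $|h|>1$, and use the asymptotics $c(d,s)\sim \frac{4s(1-s)}{\omega_{d-1}}\cdot\frac{d}{2}\cdot\frac{1}{|\mathbb{S}^{d-1}|}$-type behaviour. Concretely, we need $c(d,s)\to 0$ like $(1-s)$ together with
\[
c(d,s)\int_{|h|\le 1}\frac{h_ih_j}{|h|^{d+2s}}\,\dint{h}\to \delta_{ij}
\]
as $s\to1$, normalized so that $-(-\Delta)^s\to\Delta$. This follows from the Fourier definition of $c(d,s)$ and is standard (e.g. \cite{di_nezza_hitchhikers_2012}).

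\emph{Far part.} The contribution of $|h|>1$ is
\[
c(d,s)\int\!\!\int_{|h|>1} f(x)\,\palme(\log f(x+h)-\log f(x))\,|h|^{-d-2s}.
\]
Write $f\,\palme(\log f(x+h)-\log f(x)) = f(x+h)-f(x)-f(x)\log\frac{f(x+h)}{f(x)}$. The terms $f(x+h)$ and $f(x)$ each contribute at most $\|f\|_1\int_{|h|>1}|h|^{-d-2s}$. For the logarithmic term, use $-\log f(x+h)\le -\log c_0+\delta|x+h|^\beta$ by (i), together with $|x+h|^\beta\le C(|x|^\beta+|h|^\beta)$. This bounds the term by
\[
\int f(x)\big(|\log f(x)|+C+|x|^\beta+|h|^\beta\big)\,\dint{x},
\]
which after $h$-integration is finite precisely when $2s>\beta$, using (iii). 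Hence the far part is finite and, multiplied by $c(d,s)=O(1-s)$, tends to $0$.

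\emph{Near part: finiteness.} For $|h|\le 1$, use Taylor expansion. Set $u=\log f\in C^2$ and $r=u(x+h)-u(x)$, so that $\palme(r)=\tfrac12 r^2 e^{\theta r}$ for some $\theta\in(0,1)$. For $|x|\ge R_0$, (ii) gives $|r|\le C$, hence
\[
f(x)\palme(r)\le C f(x)\,r^2\le C f(x)\Big(\int_0^1 |\nabla u(x+th)|\,\dint{t}\Big)^2|h|^2 .
\]
Comparable values of $f$ on unit balls let us convert this to $C|h|^2\int_0^1 |\nabla\sqrt f(x+th)|^2\,\dint{t}$ times a constant, so that (iv) gives a bound $C|h|^2$ after integrating in $x$. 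On the compact set $|x|\le R_0+1$, $f$ is bounded below (by (i)) and $C^2$, so $r^2\le C|h|^2$ trivially. Thus the near part is at most $C\int_{|h|\le1}|h|^{2-d-2s}$, which gives $i_s(f)<\infty$.

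\emph{Near part: limit.} Rescale to write the near part as
\[
\int_{|h|\le 1} c(d,s)|h|^{2-d-2s}\, G(h)\,\dint{h},\qquad G(h):=|h|^{-2}\int f(x)\,\palme(u(x+h)-u(x))\,\dint{x}.
\]
The measures $c(d,s)|h|^{2-d-2s}\mathbb{1}_{|h|\le1}\,\dint{h}$ concentrate at $h=0$ with total mass tending to a constant, and in angular average they weight $\frac{h_ih_j}{|h|^2}$ so as to produce $\delta_{ij}$. It therefore suffices to show
\[
G(h)-\tfrac12\int f\,\big(\nabla u\cdot \tfrac{h}{|h|}\big)^2\,\dint{x}\to 0 \quad\text{as } h\to0,
\]
uniformly in direction, with $G$ bounded. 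Boundedness comes from the previous step. For the convergence: pointwise $\palme(r)/|h|^2\to \tfrac12(\nabla u\cdot\hat h)^2$, and domination is supplied by the estimate $f\,\palme(r)\le C|h|^2\int_0^1|\nabla\sqrt f(x+th)|^2\,\dint{t}$. Since translations are continuous in $L^1$, a generalized dominated convergence argument (Vitali) applies. Averaging over directions then gives $\frac{1}{d}\cdot\tfrac12\cdot 2\int f|\nabla u|^2$-type constants, which the normalization turns exactly into $i(f)=\int f|\nabla\log f|^2$. As a consistency check, the correct constant can be confirmed by comparing with the known limit $\langle f,(-\Delta)^s g\rangle\to\langle\nabla f,\nabla g\rangle$.

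\emph{Main obstacle.} The hardest step is the uniform domination on the near region. Bounding $f(x)\,r^2$ by translated values of $|\nabla\sqrt f|^2$ requires (ii) to compare $f(x)$ with $f(x+th)$, and the interplay of the region $|x|\ge R_0$ with the compact core must be handled carefully.
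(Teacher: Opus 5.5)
Your proof is correct. It agrees with the paper on the far region $|h|>1$: you use the same identity $a\palme(\log b-\log a)=b-a-a(\log b-\log a)$ together with (i), which gives a bound $c(d,s)\cdot O(1)$ uniformly for $s$ near $1$. It also agrees on finiteness of the near region, where you use the same key estimate $f(x)\palme(r)\le C|h|^2\int_0^1|\nabla\sqrt f(x+th)|^2\,\dint{t}$ coming from (ii). The difference is how you pass to the limit for $|h|\le 1$.

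The paper truncates in $x$. On $B_R(0)$ it uses a Taylor expansion $\palme(\log f(x+h)-\log f(x))=\frac12\langle\nabla\log f(x),h\rangle^2+O(|h|^3)$ that is uniform in $x$. It evaluates the main term exactly as $\frac{c(d,s)\omega_{d-1}}{4d(1-s)}\int_{B_R(0)}f|\nabla\log f|^2$ and kills the remainder via $c(d,s)\to0$. On $B_R(0)^c$ it uses the key estimate and the smallness of $\|\nabla\sqrt f\|_{L^2(B_{R-1}(0)^c)}$, in an $\varepsilon$--$R$ argument.

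You avoid any truncation in $x$. You view $c(d,s)|h|^{2-d-2s}\indicator{\{|h|\le1\}}\dint{h}$ as measures concentrating at the origin (the Bourgain--Brezis--Mironescu mechanism). You then prove the $s$-independent statement $G(h)-\frac12\int f(\nabla\log f\cdot\hat h)^2\,\dint{x}\to0$ by generalized dominated convergence. The dominating functions depend on $h$ (they are translates of $|\nabla\sqrt f|^2$), but they converge in $L^1$. Uniformity in the direction follows by running this argument along arbitrary sequences $h_n\to0$.

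The paper's route is more elementary and gives an explicit remainder on compacts. Yours cleanly separates the $h\to0$ limit from the $s\to1$ limit, and would carry over unchanged to other kernel families that concentrate at the origin.

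One correction: your stated normalization is off by a factor $2$. Since $c(d,s)/(1-s)\to4d/\omega_{d-1}$ and $\int_{B_1(0)}h_ih_j|h|^{-d-2s}\,\dint{h}=\frac{\omega_{d-1}}{2d(1-s)}\delta_{ij}$, one has $c(d,s)\int_{|h|\le1}h_ih_j|h|^{-d-2s}\,\dint{h}\to2\delta_{ij}$. This is what $-(-\Delta)^s\to\Delta$ actually forces, because the second-order Taylor term of $f(x+h)-f(x)$ carries a factor $\frac12$. It is this $2$ that cancels the $\frac12$ in $\palme(r)\approx\frac12r^2$; with $\delta_{ij}$ as you wrote it, you would obtain $\frac12\,i(f)$.
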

\begin{proof}
We begin by showing that $i_{s}(f)< \infty$ for all $s \in \left( \frac{\beta}{2},1\right)$. 
We write $i_{s}(f)$ as 
\begin{align*}
 i_{s}(f)
 & = c(d,s) \bigg( \int_{\bbR^{d}} \int_{B_{1}(0)} \frac{f(x) \palme(\log(f(x+h)) - \log(f(x)))}{|h|^{d+2s}} \ \dint{h} \dint{x}  \\
& \qquad + \int_{\bbR^{d}} \int_{B_{1}(0)^{c}} \frac{f(x) \palme(\log(f(x+h)) - \log(f(x)))}{|h|^{d+2s}} \ \dint{h} \dint{x}\bigg)\\
& =: i_{s,1}(f) + i_{s,2}(f). 
\end{align*} 

We first estimate $i_{s,2}(f)$. Note that
\begin{align}\label{abPalme}
0\le a\Upsilon(\log b-\log a)=b-a-a\big(\log b-\log a\big),\quad a,b>0.
\end{align}
In particular, we have
\begin{equation*}
a\Upsilon(\log (b)-\log (a))\le b+a\log (a)+a\big(-\log (b)\big),\quad a,b>0,
\end{equation*}
which will be used below for $b<1$, and
\begin{equation*}
a\Upsilon(\log (b)-\log (a))\le b+a\log a,\quad a>0,\,b\ge 1.
\end{equation*}
Moreover, by assumption (i), 
\[
-\log (f(x))\leq -\log (c_{0}) + \delta|x|^{\beta},\quad x\in \bbR^{d}.
\]
For $x \in \bbR^{d}$ we set $A_{x}(f) = \{h \in \bbR^{d}\ :\ f(x+h)\geq 1\}.$ We estimate 
\begin{align*}
&\frac{ i_{s,2}(f)}{c(d,s)}
\leq  \int_{\bbR^d}\int_{B_1(0)^C\cap A_x(f)}\,\frac{f(x+h)+f(x)\log(f(x))}{|h|^{d+2s}}\, \dint{h} \dint{x}\\
&\quad\quad + \int_{\bbR^d}\int_{B_1(0)^C\cap A_x(f)^C}
\!\!\!\!\!\frac{f(x+h)+f(x)\log(f(x))-f(x)\log(f(x+h))
}{|h|^{d+2s}}\, \dint{h} \dint{x}\\
&\quad =  \int_{B_1(0)^C} \frac{1}{|h|^{d+2s}} \int_{\bbR^d} f(x+h) \dint{x} \dint{h}+ \int_{\bbR^d}f(x)\log( f(x)) \dint{x}\int_{B_1(0)^C}\frac{\dint{h}}{|h|^{d+2s}}\\
& \quad\quad+\int_{\bbR^d}\int_{B_1(0)^C\cap A_x(f)^C}\frac{f(x)\big(-\log(f(x+h))\big)
}{|h|^{d+2s}}\, \dint{h} \dint{x}\\
&\quad\leq \omega_{d-1}\big(\norm{f}{1}+\norm{f\log(f)}{1}\big)\int_{1}^{\infty} \frac{\dint{\rho}}{\rho^{1+2s}}\\
& \quad\quad +\int_{\bbR^d} \int_{B_1(0)^C}\frac{f(x)\big(-\log(c_0)+\delta|x+h|^{\beta}\big)}
{|h|^{d+2s}}\,\dint{h} \dint{x},
\end{align*}
where $\omega_{d-1}$ denotes the surface area of the unit sphere in $\bbR^d$. Note that 
\[ 
|x+h|^\beta\leq 2^\beta\big(|x|^\beta+|h|^\beta\big)\le 4\big(|x|^\beta+|h|^\beta\big),
\]
and thus
\begin{align*}
\int_{\bbR^d} \int_{B_1(0)^C}&\frac{f(x)\big(-\log( c_0)+\delta|x+h|^\beta\big)}
{|h|^{d+2s}}\,\dint{h}\ \dint{x}\\
 &\leq \omega_{d-1} \Big(\log\big( c_0^{-1}\big) \norm{f}{1}+4\delta \int_{\bbR^d}f(x)|x|^\beta\,dx\Big) \int_1^\infty \frac{\dint{\rho}}{\rho^{1+2s}}\\
& \quad+4\delta\omega_{d-1}\norm{f}{1} 
\int_1^\infty \frac{\dint{\rho}}{\rho^{1+2s-\beta}}.
\end{align*}
Overall, we obtain an estimate of the form
\begin{align}
i_{s,2}(f) & \leq c(d,s) \tilde{C}\Big(\norm{f}{1}+\norm{f\log(f)}{1}+\int_{\bbR^d}f(x)|x|^\beta\,dx\Big)\Big(1+\int_{1}^\infty \!\!\!\!\frac{\dint \rho}{\rho^{1+2s-\beta}}\Big), \label{upperest i2}
\end{align}
where $\tilde{C}=\tilde{C}(d,\delta_0,c_0,\beta)$ does not depend on $s\in(\frac{\beta}{2},1)$. This shows $i_{s,2}(f)<\infty$.

Next, we want to estimate $i_{s,1}(f)$. Let $R \geq R_{0}$. We write
\begin{align*}
i_{s,1}(f) & =  c(d,s) \int_{B_{R}(0)} \int_{B_{1}(0)} \frac{f(x) \palme(\log(f(x+h))-\log(f(x))}{|h|^{d+2s}}\ \dint{h}\, \dint{x} \\
&\quad+ c(d,s) \int_{B_{R}(0)^{c}} \int_{B_{1}(0)} \frac{f(x) \palme(\log(f(x+h))-\log(f(x))}{|h|^{d+2s}}\ \dint{h}\, \dint{x}\\
&=:  i_{s,1,a}(f) + i_{s,1,b}(f).
\end{align*}
By the continuity of $f$ and assumption (ii), there exist constants $0<M_{1}<M_{2},$ depending only on $f,c_1,c_2$ and $R_{0}$, such that $\frac{f(x+h)}{f(x)} \in [M_{1},M_{2}]$ for all $x \in \bbR^{d}$ and $h \in B_{1}(0).$ Thus we find a constant $c_{4}=c_4(M_1,M_2)>0$ such that for all $x\in B_R(0)$ and $h\in B_1(0)$,
\[ 
\palme(\log(f(x+h))-\log(f(x)) \leq c_{4} \big(\log(f(x+h))-\log(f(x))\big)^{2}. 
\]
Using this, together with the estimate
\begin{align*}
\big(\log(f(x+h))-\log(f(x))\big)^{2} 
\leq  \int_{0}^{1} |\nabla \log(f(x+th))|^{2}\ \dint{t} \cdot |h|^{2},
\end{align*}
we obtain
\begin{align*}
&\frac{i_{s,1,a}(f)}{c(d,s)}  \leq  c_{4}  \int_{B_{1}(0)} \frac{1}{|h|^{d-2+2s}} \int_{0}^{1} \int_{B_{R}(0)} f(x) \cdot |\nabla \log(f(x+th))|^{2}\ \dint{x}\, \dint{t}\, \dint{h}\\
&\quad\leq  \,\frac{c_{4}}{M_{1}} \int_{B_{1}(0)} \frac{1}{|h|^{d-2+2s}} \int_{0}^{1} \int_{B_{R}(0)}f(x+th) \cdot |\nabla \log(f(x+th))|^{2}\ \dint{x}\, \dint{t}\, \dint{h}\\
&\quad\leq  \, \frac{4 c_{4}\omega_{d-1}}{M_{1}} \norm{\nabla \sqrt{f}}{2}^{2}\int_{0}^{1} \frac{1}{\rho^{2s-1}}\ \dint{\rho}
= \frac{2 c_{4}\omega_{d-1}}{M_{1}(1-s)} \norm{\nabla \sqrt{f}}{2}^{2}< \infty, 
\end{align*}
thanks to (iv).

Since, by (ii), $\frac{f(x+ h)}{f(x)} \in [c_{1},c_{2}]$ for all $x\in B_{R}(0)^{c}$ and $h \in B_{1}(0)$, the same argument as above shows that there exists a constant $c_{5}> 0$ independent of $s$ and $R$ such that 
\begin{align}
i_{s,1,b}(f) \leq \frac{2 c(d,s)c_{5}\omega_{d-1}}{c_{1}(1-s)} \norm{\nabla \sqrt{f}}{L^2(B_{R-1}(0)^{c})}^{2} < \infty. \label{upperest i_s,b}
\end{align}
Altogether, this shows $i_{s}(f)< \infty$ for all $s \in (\frac{\beta}{2},1)$.

Next, we prove the convergence $i_{s}(f)\to i(f)$ as $s \rightarrow 1$. Let $\varepsilon> 0$ be arbitrary and fix $s_{0} \in \left( \frac{\beta}{2},1 \right)$. In view of \eqref{upperest i2}, we have
\begin{align*}
0\le i_{s,2}(f) 
\leq  c(d,s) M(f),\quad s\in [s_0,1),
\end{align*}
for a suitable constant $M(f)$ independent of $s\in [s_0,1)$. Since $\lim_{s \to 1} \frac{c(d,s)}{1-s} = \frac{4d}{\omega_{d-1}}$, see \cite[Corollary 4.2]{di_nezza_hitchhikers_2012}, it follows that
\[
i_{s,2}(f)\to 0\quad \text{as}\;\,s\to 1.
\]

Since $\nabla \sqrt{f} \in L^{2}(\bbR^{d})$, there is $R\geq R_{0} $ such that 
$\norm{\nabla \sqrt{f}}{L^2(B_{R-1}(0)^{c})}^{2}
 < \varepsilon$. Thus, due to \eqref{upperest i_s,b}, we have 
 \begin{align*}
     i_{s,1,b}(f) \leq \frac{2 c(d,s)c_{5}\omega_{d-1}}{c_{1}(1-s)} \,\varepsilon \leq M  \varepsilon, 
\end{align*}
for a suitable constant $M =M(c_{1},c_{5},d)$. Here we use that $\frac{c(d,s)}{1-s}$ is bounded on $[s_{0},1)$.

We now turn to $i_{s,1,a}(f)$. Since $f \in C^{2}$ we have 
\[
\log(f(x+h)) - \log(f(x)) = \scalprod{\nabla \log(f(x))}{h}+ R(x,h),
\]
with the remainder term satisfying 
\[ 
|R(x,h)| \leq \tilde{M} |h|^{2},\quad x \in B_{R}(0),\,h\in B_1(0),
\]
where $\tilde{M}$ only depends on $c_0,\delta,\beta$ and the $C^2(\overline{B_{R+1}(0)})$-norm of $f$.

Moreover, by Taylor's theorem, we have 
$\palme(r) = \frac{1}{2} r^{2} + O(r^{3})$
for small $r$, and thus
\[ 
\palme\big(\log(f(x+h)) - \log(f(x))\big) = \frac{1}{2}\scalprod{\nabla \log(f(x))}{h}^{2} + \tilde{R}(x,h), 
\] 
where 
\[ 
|\tilde{R}(x,h)| \leq  \hat{M} |h|^{3},\quad x \in B_{R}(0),\,h\in B_1(0),
\] 
for a suitable constant $\hat{M}$.
Therefore,
\begin{align*}
{i_{s,1,a}(f)} & = \frac{c(d,s)}{2}\int_{B_{R}(0)} \int_{B_{1}(0)} \frac{f(x)\scalprod{\nabla \log(f(x))}{h}^{2}}{|h|^{d+2s}}\ \dint{h} \,\dint{x} \\
&+  c(d,s) \int_{B_{R}(0)} \int_{B_{1}(0)} \frac{f(x)\tilde{R}(x,h)}{|h|^{d+2s}}\ \dint{h}\,\dint{x}=:j_{s,1}(f)+j_{s,2}(f).
\end{align*}
Observe that 
\begin{align*}
 |j_{s,2}(f)|& \leq c(d,s) 
 \hat{M}|B_{R}(0)| \norm{f}{L^\infty(B_R(0))}  \int_{B_{1}(0)} \frac{|h|^{3}}{|h|^{d+2s}}\ \dint{h} \\
& = \frac{c(d,s) 
 \hat{M}\omega_{d-1}}{3-2s}|B_{R}(0)| \norm{f}{L^\infty(B_R(0))} \rightarrow 0 \quad\text{as}\;\,s\to 1,
\end{align*}
because $c(d,s)\to 0$ as $s\to 1$.

Turning to $j_{s,1}(f)$, we have 
\begin{align*}
\int_{B_{1}(0)} \!\!\!\frac{\scalprod{\nabla \log(f(x))}{h}^{2}}{|h|^{d+2s}}\ \dint{h} & = \sum_{i,j = 1}^{d} \partial_{x_{i}} \log(f(x)) \partial_{x_{j}}\log(f(x)) \int_{B_{1}(0)} \frac{h_{i}h_{j}}{|h|^{d+2s}}\ \dint{h}\\
& =  \sum_{i = 1}^{d} (\partial_{x_{i}} \log(f(x)))^{2} \int_{B_{1}(0)} \frac{h_{i}^{2}}{|h|^{d+2s}} \ \dint{h} \\
& = \frac{1}{d} \sum_{i = 1}^{d} (\partial_{x_{i}} \log(f(x)))^{2} \int_{B_{1}(0)} \frac{\dint{h}}{|h|^{d+2s-2}}\   \\
&= \frac{\omega_{d-1}}{2d(1-s)} |\nabla \log(f(x))|^{2}.
\end{align*}
Hence
\begin{align*}
j_{s,1}(f) =\frac{c(d,s)}{4d(1-s)}\omega_{d-1}  \int_{B_{R}(0)} f(x)|\nabla \log(f(x))|^{2}\ \dint{x}. 
\end{align*}

Finally, writing $i(f)=\int_{B_R(0)}\ldots+\int_{B_R(0)^c}\ldots$, we may estimate as
\begin{align}
|i_{s}(f) - i(f)| & =|i_{s,2}(f)+i_{s,1,a}(f)+i_{s,1,b}(f)-i(f)|\nonumber\\
& \le i_{s,2}(f)+i_{s,1,b}(f)+|j_{s,2}(f)|+4\norm{\nabla \sqrt{f}}{L^2(B_{R}(0)^{c})}^{2}\nonumber\\
&\quad +4\Big| \frac{c(d,s)}{4d(1-s)}\omega_{d-1}-1\Big|  \norm{\nabla \sqrt{f}}{2}^{2}.   \label{FinalEstimate}  
\end{align} 
Recall that $0\le i_{s,2}(f)+i_{s,1,b}(f)+|j_{s,2}(f)|\to 0$ and $ \frac{c(d,s)}{1-s} \to \frac{4d}{\omega_{d-1}}$ as $s\to 1$ as well as
$\norm{\nabla \sqrt{f}}{L^2(B_{R-1}(0)^{c})}^{2}
 < \varepsilon$.
Hence, for $s$ sufficiently close to $1$, the right-hand side of \eqref{FinalEstimate} can be estimated by $C\varepsilon$ with some suitable constant $C$ independent of $\varepsilon$. This proves $\lim_{s \to 1} i_{s}(f)  = i(f)$.
\end{proof}

\section{Nonlocal Blachman-Stam inequality}\label{Sec5}
\noindent An important property of the classical Fisher information is the {\em Blachman-Stam inequality}. It states that if $f,g$ are suitable probability densities on $\bbR^{d}$, then for every $\alpha \in (0,1)$,
$$i(f_{\sqrt{\alpha}} \ast g_{\sqrt{1-\alpha}}) \leq \alpha \,  i(f) + (1-\alpha) \, i(g), $$
where $\ast$ denotes convolution and where for a constant $c > 0$, the rescaled density $f_c$ is given by $f_{c}(x) = \frac{1}{c^{d}} f\left(\frac{x}{c}\right)$, $x \in \bbR^{d}$. See, for example, \cite[Theorem 7]{carlen_superadditivity_1991}.

A crucial role in the proof of the Blachman-Stam inequality given in \cite{carlen_superadditivity_1991} is played by the inequality \begin{align} \int_{\bbR^{d}}\left|\nabla_{x}\left( \int_{\bbR^{d}} g(x,y) \ \dint{y}\right)^{\frac{1}{2}} \right|^{2} \ \dint{x} \leq \int_{\bbR^{d}} \int_{\bbR^{d}} \left|\nabla_{x}g(x,y)^{\frac{1}{2}}\right|^{2} \ \dint{y}\, \dint{x},  \label{carlen} \end{align}
 valid for suitable  positive functions $g$, compare \cite[Theorem 2]{carlen_superadditivity_1991}.
 Using that 
 \[
 4|\nabla \sqrt{g}|^{2} = \, \frac{|\nabla g|^{2}}{g} =  |\nabla \log(g)|^{2} g,
 \]
 inequality \eqref{carlen} can be rewritten as \begin{align}
     &\int_{\bbR^{d}} \left|\nabla_{x}\log \left( \int_{\bbR^{d}} g(x,y) \ \dint{y}\right)\right|^{2} \, \left(\int_{\bbR^{d}} g(x,y) \ \dint{y} \right)\ \dint{x}  \notag \\ & \leq \int_{\bbR^{d}}\int_{\bbR^{d}} |\nabla_{x} \log g(x,y)|^{2} \,g(x,y)\ \dint{y}\ \dint{x}.\label{carlen2}
 \end{align}
Note that \eqref{carlen2} is an integrated version of the pointwise estimate \eqref{gradientIE} with $f=1$ and $H(x,y)=g(x,y)$. Since a corresponding nonlocal version of \eqref{carlen2} is provided by the key Lemma \ref{main inequlity frederic} (see also \eqref{PsiUpsIE} in Section 1), it is natural to expect that an analogous Blachman-Stam inequality holds for the Fisher information associated with the fractional Laplacian. This is precisely the content of the following theorem.
\begin{theorem} \label{ThmBSI}
Let $s \in (0,1)$ and $f,g$ be two probability densities on $\bbR^d$ satisfying $i_{s}(f),i_{s}(g)< \infty$. Then
\begin{align}
    i_{s}(f_{\sqrt{\alpha}} \ast g_{\sqrt{1-\alpha}}) \leq \alpha^{s}
    \, i_{s}(f) + (1-\alpha)^{s} \,  i_{s}(g),\quad \alpha\in (0,1). \label{Improved Blachman Stam}
\end{align}
\end{theorem}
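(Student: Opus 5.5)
The plan is to write the convolution as a projection of a product density along a suitably sheared family of translations, apply Lemma \ref{main inequlity frederic} pointwise, and then integrate. The point is to choose the shear so that a single jump $z$ of the sum splits into the jump $\alpha z$ in the first variable and $(1-\alpha)z$ in the second; scaling then produces the factors $\alpha^s$ and $(1-\alpha)^s$. Throughout set $a=\sqrt{\alpha}$, $b=\sqrt{1-\alpha}$, $h=f_a\ast g_b$, and $k_s(z)=c(d,s)|z|^{-d-2s}$.

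\emph{Step 1 (scaling).} A change of variables $y\mapsto y/c$ in \eqref{Psipalme} gives $\Psi^{k_s}_{\palme}(\log f_c)(x)=c^{-2s}\Psi^{k_s}_{\palme}(\log f)(x/c)$. Hence $i_s(f_c)=c^{-2s}i_s(f)$, so $i_s(f_a)=\alpha^{-s}i_s(f)$ and $i_s(g_b)=(1-\alpha)^{-s}i_s(g)$.

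\emph{Step 2 (pointwise inequality via the lemma).} Fix a jump $z$. Using translation invariance of Lebesgue measure, write
\[
h(x+z)=\int_{\bbR^d} f_a(x-y+\alpha z)\,g_b(y+(1-\alpha)z)\,\dint{y}.
\]
Thus $h(x+z)/h(x)$ is an average, with respect to the probability measure $f_a(x-y)g_b(y)\,\dint{y}/h(x)$, of the ratio $e^{A+B}$, where
\[
A=\log f_a(x-y+\alpha z)-\log f_a(x-y),\qquad B=\log g_b(y+(1-\alpha)z)-\log g_b(y).
\]
Convexity of $r\mapsto r\log r-r+1$ (equivalently, the Jensen argument behind Lemma \ref{main inequlity frederic}, applied to an $H$ adapted to this shear) yields
\[
h(x)\,\palme\big(\log h(x+z)-\log h(x)\big)\le\int_{\bbR^d} f_a(x-y)g_b(y)\,\palme(A+B)\,\dint{y}.
\]
This uses $a\,\palme(\log b-\log a)=b-a-a\log(b/a)$ from \eqref{abPalme} and the fact that $(u,v)\mapsto v\log(v/u)-v+u$ is jointly convex and $1$-homogeneous. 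Integrating against $k_s(z)\,\dint{z}\,\dint{x}$ bounds $i_s(h)$ by the corresponding integral over $(p,q)=(x-y,y)$.

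\emph{Step 3 (splitting $\palme(A+B)$).} Use the identity
\[
\palme(A+B)=\palme(A)+\palme(B)+(e^A-1)(e^B-1).
\]
After Tonelli, for fixed $z$ the cross term factorizes as
\[
\Big(\int f_a(p+\alpha z)-f_a(p)\,\dint{p}\Big)\Big(\int g_b(q+(1-\alpha)z)-g_b(q)\,\dint{q}\Big),
\]
and each factor vanishes because $f_a,g_b$ are integrable and Lebesgue measure is translation invariant. For the $\palme(A)$ term, substitute $w=\alpha z$, so that $k_s(z)\,\dint{z}=\alpha^{2s}k_s(w)\,\dint{w}$. This gives $\alpha^{2s}i_s(f_a)=\alpha^{s}i_s(f)$, and the $\palme(B)$ term gives $(1-\alpha)^{2s}i_s(g_b)=(1-\alpha)^{s}i_s(g)$. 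Together this is \eqref{Improved Blachman Stam}.

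\emph{Main obstacle.} The main difficulty is justifying the integration steps rather than the algebra. The cross term $(e^A-1)(e^B-1)$ is not sign-definite, so Fubini needs absolute integrability. First restrict to $|z|\ge\varepsilon$, where $\int_{|z|\ge\varepsilon}k_s<\infty$ and $\int|f_a(p+\alpha z)-f_a(p)|\,\dint{p}\le 2$ give integrability. The cross term then vanishes on each such region, and the remaining nonnegative terms are controlled by $i_s(f),i_s(g)<\infty$. Finally let $\varepsilon\to0$ by monotone convergence on both sides. Positivity of $h$ is automatic; if $f$ or $g$ vanish somewhere, set $\palme(\log 0-\cdot)$ by its limiting value, or argue with $\log$ replaced by the convex functional $v\log(v/u)-v+u$, which handles zeros directly.
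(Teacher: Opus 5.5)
Your proof is correct and follows the paper's argument. Your shear $y\mapsto y+(1-\alpha)z$ is exactly the paper's jump in $x$, with the second variable fixed, for $\rho(x,y)=f(L_\alpha(x,y))\,g(B_\alpha(x,y))$, and your Jensen step is the pointwise content of Lemma \ref{main inequlity frederic} with $H=\rho$; the splitting $\palme(A+B)=\palme(A)+\palme(B)+(e^A-1)(e^B-1)$, the $\varepsilon$-truncation that kills the cross term, and the final scaling all match the paper, except that you scale via $i_s(f_c)=c^{-2s}i_s(f)$ while the paper substitutes $z=\sqrt{\alpha}\,h$ at the end.
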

\begin{proof} Our proof is inspired by the line of arguments used in the proof of \cite[Theorem 7]{carlen_superadditivity_1991}. For ease of notation, we will write $\Psi_{\palme}^{s}$ instead of $\Psi_{\palme}^{k_{s}}$ in the following.

Let $f,g$ be two probability densities and fix $\alpha \in (0,1).$ For $x,y \in \bbR^{d}$, we set
\begin{align*}
 L_{\alpha}(x,y)& = \sqrt{\alpha}\,x- \sqrt{1-\alpha}\,y,\\
 B_{\alpha}(x,y) & = \sqrt{1-\alpha}\,x + \sqrt{\alpha}\,y,\\
 \rho(x,y) &= f\big(L_{\alpha}(x,y)\big) \, g\big(B_{\alpha}(x,y)\big).
\end{align*}

First, as in the proof given by Carlen in \cite{carlen_superadditivity_1991}, we rewrite $(f_{\sqrt{\alpha}} \ast g_{\sqrt{1-\alpha}})(x)$ for $x \in \bbR^{d}$ by means of the substitution $ z = \frac{y}{\sqrt{\alpha} \cdot \sqrt{1-\alpha}} - \frac{\sqrt{1-\alpha}}{\sqrt{\alpha}}x$.
\begin{align*}
 &(f_{\sqrt{\alpha}} \ast g_{\sqrt{1-\alpha}})(x) = \int_{\bbR^{d}} \frac{1}{\sqrt{\alpha}^{d}}\, f\left(\frac{x-y}{\sqrt{\alpha}} \right) \, \frac{1}{\sqrt{1-\alpha}^{d}}\, g\left(  \frac{y}{\sqrt{1-\alpha}}\right)\  \dint{y} \\
&= \int_{\bbR^{d}} f(\sqrt{\alpha}\,x - \sqrt{1-\alpha}\,z) \, g(\sqrt{1-\alpha}\,x + \sqrt{\alpha}\,z)\ \dint{z}=\int_{\bbR^{d}} \rho(x,y)\,\dint{y}.
\end{align*}
Applying Lemma \ref{main inequlity frederic} with $H(x,y) = \rho(x,y)$ yields
\begin{align*} 
& \Psi_{\palme}^{s}\left( \log\left( f_{\sqrt{\alpha}} \ast g_{\sqrt{1-\alpha}} \right)\right)(x) \cdot (f_{\sqrt{\alpha}} \ast g_{\sqrt{1-\alpha}})(x)  \notag \\
&=  \Psi_{\palme}^{s}\left(   \log\left(\int_{\bbR^{d}}\rho(\cdot,y)  \ \dint{y} \right)\right)(x)  \, \int_{\bbR^{d}}\rho(x,y)\ \dint{y} \notag \\
& \leq \int_{\bbR^{d}}  \Psi_{\palme}^{s}\left(\log(\left( \rho(\cdot,y)\right) \right)(x) \,   \rho(x,y) \ \dint{y},
\end{align*}
which in turn, by integrating over $x\in \bbR^d$, implies
\begin{align}
& i_{s}(f_{\sqrt{\alpha}} \ast g_{\sqrt{1-\alpha}}) = \int_{\bbR^{d}}  \Psi_{\palme}^{s}\left( \log\left( f_{\sqrt{\alpha}} \ast g_{\sqrt{1-\alpha}} \right)\right)(x) \cdot (f_{\sqrt{\alpha}} \ast g_{\sqrt{1-\alpha}})(x)\ \dint{x} \notag \\
&\leq \int_{\bbR^{d}} \int_{\bbR^{d}}  \Psi_{\palme}^{s}\left(\log \rho(\cdot,y) \right)(x) \cdot   \rho(x,y) \ \dint{y}\,\dint{x}.\label{Finale Ungleichung}
\end{align}

Next, we rewrite the integrand on the right-hand side of inequality \eqref{Finale Ungleichung} in an appropriate way. Due to identity \eqref{abPalme},
\begin{align}
&\rho(x,y) \, \palme\big( \log \rho(\tilde{x},y) - \log \rho(x,y)\big)  \notag \\ 
& = \rho(\tilde{x},y) - \rho(x,y) - \rho(x,y) \big( \log \rho(\tilde{x},y) -\log \rho(x,y) \big) \notag \\
& = f(L_{\alpha}(\tilde{x},y)) g(B_{\alpha}(\tilde{x},y)) -  f(L_{\alpha}(x,y)) g(B_{\alpha}(x,y)) \notag \\
& \quad - f(L_{\alpha}(x,y)) g(B_{\alpha}(x,y))\Big[ \log(f(L_{\alpha}(\tilde{x},y))) - \log(f(L_{\alpha}(x,y)))  \notag \\ & \qquad \qquad \qquad \qquad \qquad \quad + \log(g(B_{\alpha}(\tilde{x},y))) - \log(g(B_{\alpha}(x,y))) \Big]. \label{intermediate result}
\end{align}
Adding and subtracting $f(L_{\alpha}(\tilde{x},y))g(B_{\alpha}(x,y))$ and  $f(L_{\alpha}(x,y)) g(B_{\alpha}(\tilde{x},y))$ on the right-hand side of the relation \eqref{intermediate result} and rearranging the terms we find
\begin{align*}
&\rho(x,y)\,\palme\bigl(\log\rho(\tilde{x},y)-\log\rho(x,y)\bigr) \notag \\
&= g(B_\alpha(x,y))  \Bigl(
      f(L_\alpha(\tilde{x},y)) - f(L_\alpha(x,y)) \notag\\
&\quad - f(L_\alpha(x,y)) \bigl(\log f(L_\alpha(\tilde{x},y)) - \log f(L_\alpha(x,y))\bigr)
      \Bigr) \notag\\
&\quad + f(L_\alpha(x,y))  \Bigl(
      g(B_\alpha(\tilde{x},y)) - g(B_\alpha(x,y)) \notag\\
&\quad - g(B_\alpha(x,y)) \bigl(\log g(B_\alpha(\tilde{x},y)) - \log g(B_\alpha(x,y))\bigr)
      \Bigr) \notag\\
&\quad + f(L_\alpha(\tilde{x},y)) g(B_\alpha(\tilde{x},y)) 
      + f(L_\alpha(x,y)) g(B_\alpha(x,y)) \notag\\
&\quad - f(L_\alpha(x,y)) g(B_\alpha(\tilde{x},y)) 
      - f(L_\alpha(\tilde{x},y)) g(B_\alpha(x,y)) \notag \\
&=g(B_\alpha(x,y)) f(L_{\alpha}(x,y)) \palme\bigl(
      \log f(L_\alpha(\tilde{x},y)) - \log f(L_\alpha(x,y)) \bigl) \notag\\
&\quad + f(L_\alpha(x,y)) g(B_{\alpha}(x,y)) \palme\bigl(
      \log g(B_\alpha(\tilde{x},y)) - \log g(B_\alpha(x,y))\bigr) \notag\\
&\quad  + \bigl(f(L_{\alpha}(\tilde{x},y))-f(L_{\alpha}(x,y))\bigr) \cdot \bigl(g(B_{\alpha}(\tilde{x},y))-g(B_{\alpha}(x,y))\bigr),
\end{align*}
where in the last step we used again identity \eqref{abPalme}. 
For fixed $\varepsilon> 0$ it follows that
\begin{align}
& \int_{\bbR^{d}} \int_{\bbR^{d}} \int_{\bbR^{d}} \indicator{\bbR^{d} \setminus B_{\varepsilon}(x)}(\tilde{x}) \rho(x,y)\, \frac{\palme\left( \log\rho(\tilde{x},y) -\log\rho(x,y)\right)}{|x-\tilde{x}|^{d+2s}}\ \dint{\tilde{x}}\, \dint{y}\, \dint{x} \notag \\
&= \int_{\bbR^{d}} \int_{\bbR^{d}} \int_{\bbR^{d}} \indicator{\bbR^{d} \setminus B_{\varepsilon}(x)}(\tilde{x}) \Big[g(B_{\alpha}(x,y)) f(L_{\alpha}(x,y)) \frac{\palme\left(\log\left(\frac{f(L_{\alpha}(\tilde{x},y))}{f(L_{\alpha}((x,y))}\right)\right)}{|x-\tilde{x}|^{d+2s}} \notag \\
& \qquad \qquad \qquad \qquad \qquad   +  f(L_{\alpha}(x,y)) g(B_{\alpha}(x,y)) \frac{\palme\left(\log\left(\frac{g(B_{\alpha}(\tilde{x},y))}{g(B_{\alpha}(x,y))}\right)\right)}{|x-\tilde{x}|^{d+2s}}   \notag \\
&   + \frac{\left( f(L_{\alpha}(\tilde{x},y)) - f(L_{\alpha}(x,y))\right)\, \left(  g(B_{\alpha}(\tilde{x},y))-  g(B_{\alpha}(x,y))\right)}{|x-\tilde{x}|^{d+2s}}\Big]\ \dint{\tilde{x}}\,\dint{y} \,\dint{x}. \label{identity triple integral}
\end{align}
Observe that, by positivity of $f$ and $g$,
\begin{align*}
&\Theta(x,y,\tilde{x}):=\big|f(L_{\alpha}(\tilde{x},y)) - f(L_{\alpha}(x,y))\big|\,\big|g(B_{\alpha}(\tilde{x},y))-  g(B_{\alpha}(x,y))\big|\\
& \le f(L_{\alpha}(\tilde{x},y))\, g(B_{\alpha}(\tilde{x},y)) +f(L_{\alpha}(\tilde{x},y))\, g(B_{\alpha}(x,y))\\
& \quad+f(L_{\alpha}(x,y))\, g(B_{\alpha}(h+x,y)) + f(L_{\alpha}(x,y))\, g(B_{\alpha}(x,y)). 
\end{align*}
Using this and the substitution $h=\tilde{x}-x$, we obtain
\begin{align}
&\int_{\bbR^{d}} \int_{\bbR^{d}} \int_{\bbR^{d}} \indicator{\bbR^{d} \setminus B_{\varepsilon}(x)}(\tilde{x})\,\frac{\Theta(x,y,\tilde{x})}{|x-\tilde{x}|^{d+2s}}\,\dint{\tilde{x}}\,\dint{y}\, \dint{x}\nonumber\\
&  \le   \int_{\bbR^{d}} \int_{\bbR^{d}} \int_{\bbR^{d}} \indicator{\bbR^{d} \setminus B_{\varepsilon}(0)}(h) \Big[\frac{f(L_{\alpha}(h + x,y)) g(B_{\alpha}(h+x,y)) }{|h|^{d+2s}}  \notag \\
& \qquad \qquad \qquad+ \frac{f(L_{\alpha}(h+x,y)) g(B_{\alpha}(x,y))+
f(L_{\alpha}(x,y)) g(B_{\alpha}(h+x,y))}{|h|^{d+2s}}\nonumber\\
&\qquad\qquad\qquad+ \frac{f(L_{\alpha}(x,y)) g(B_{\alpha}(x,y))}
{|h|^{d+2s}}\Big]\ \dint{h}\, \dint{y}\, \dint{x}.
 \label{integral ident 1}
\end{align}
We then change variables by setting $u = L_\alpha(x,y)$ and $v = B_\alpha(x,y)$.
Note that the corresponding Jacobian determinant equals $1$. Therefore, the integral on the right of \eqref{integral ident 1} is equal to
\begin{align}
 & \int_{\bbR^{d}} \int_{\bbR^{d}} \int_{\bbR^{d}} \indicator{\bbR^{d} \setminus B_{\varepsilon}(0)}(h) \Big[\frac{f(\sqrt{\alpha}\,h +u)\, g(\sqrt{1-\alpha}\,h + v) +f(\sqrt{\alpha}\,h + u)\, g(v)}{|h|^{d+2s}}  \notag \\
& \qquad \qquad \qquad+ \frac{f(u)\, g(\sqrt{1-\alpha}\,h + v) + f(u) g(v) }{|h|^{d+2s}} \Big] \ \dint{h}\, \dint{u}\, \dint{v} \notag \\
&= 4 \, \int_{\bbR^{d}} \indicator{\bbR^{d} \setminus B_{\varepsilon}(0)}(h) \, \frac{1}{|h|^{d+2s}}\ \dint{h}< \infty, \label{Fubini allowed}
\end{align}
where it was used that $f$ and $g$ are probability densities. 

Consequently, employing again the substitutions $h = \tilde{x} - x$,
$u = L_\alpha(x,y)$ and $v = B_\alpha(x,y)$ and changing the order of integration, which is justified by \eqref{Fubini allowed}, we find
\begin{align*}
& \int_{\bbR^{d}} \int_{\bbR^{d}} \int_{\bbR^{d}}   \indicator{\bbR^{d} \setminus B_{\varepsilon}(x)}(\tilde{x})\frac{\left( f(L_{\alpha}(\tilde{x},y)) - f(L_{\alpha}(x,y))\right)}{|x-\tilde{x}|^{d+2s}}\,\times\nonumber\\
& \qquad\qquad\qquad\qquad\qquad\times \big(  g(B_{\alpha}(\tilde{x},y))-  g(B_{\alpha}(x,y))\big)\ \dint{\tilde{x}}\,\dint{y}\, \dint{x} \notag \\
&= \int_{\bbR^{d}} \int_{\bbR^{d}} \int_{\bbR^{d}}   \indicator{\bbR^{d} \setminus B_{\varepsilon}(0)}(h) \frac{\left( f(\sqrt{\alpha}h + u) - f(u)\right) \left(  g(\sqrt{1-\alpha} + v)-  g(v)\right)}{|h|^{d+2s}} \ \dint{h}\,\dint{u} \,\dint{v} \notag \\
&=  \int_{\bbR^{d}} \frac{\indicator{\bbR^{d} \setminus B_{\varepsilon}(0)}(h)}{|h|^{d+2s}} \int_{\bbR^{d}}\left(  g(\sqrt{1-\alpha}h + v)-  g(v)\right)\times\nonumber\\
&\qquad\qquad\qquad\qquad\times \int_{\bbR^{d}}  \left( f(\sqrt{\alpha}h + u) - f(u)\right)\ \dint{u}\, \dint{v}\, \dint{h} = 0, \label{mixed term zero}
\end{align*}
because $f$ is a probability density. 

Thus, for every $\varepsilon > 0$, we have
\begin{align}
&\int_{\bbR^{d}} \int_{\bbR^{d}} \int_{\bbR^{d}} \indicator{\bbR^{d} \setminus B_{\varepsilon}(x)}(\tilde{x}) \rho(x,y) \, \frac{\palme\big( \log(\rho(\tilde{x},y)) -\log(\rho(x,y))\big)}{|x-\tilde{x}|^{d+2s}}\ \dint{\tilde{x}}\, \dint{y}\, \dint{x}  \notag \\
&=  \int_{\bbR^{d}} \int_{\bbR^{d}} \int_{\bbR^{d}} \indicator{\bbR^{d} \setminus B_{\varepsilon}(x)}(\tilde{x}) \Big[g(B_{\alpha}(x,y)) f(L_{\alpha}(x,y))\, \frac{\palme\left(\log\left(\frac{f(L_{\alpha}(\tilde{x},y))}{f(L_{\alpha}((x,y))}\right)\right)}{|x-\tilde{x}|^{d+2s}} \notag \\
& \qquad \qquad \qquad \qquad \qquad   +  f(L_{\alpha}(x,y)) g(B_{\alpha}(x,y))\, \frac{\palme\left(\log\left(\frac{g(B_{\alpha}(\tilde{x},y))}{g(B_{\alpha}(x,y))}\right)\right)}{|x-\tilde{x}|^{d+2s}} \Big]\ \dint{\tilde{x}}\,  \dint{y}\, \dint{x} \notag\\
&= \int_{\bbR^{d}} \int_{\bbR^{d}} \int_{\bbR^{d}} \indicator{\bbR^{d} \setminus B_{\varepsilon}(0)}(h) \Big[g(v) f(u) \frac{\palme\left(\log\left(\frac{f(\sqrt{\alpha}\,h + u)}{f(u)}\right)\right)}{|h|^{d+2s}} \notag \\
& \qquad \qquad \qquad \qquad \qquad   +  f(u) g(v) \frac{\palme\left(\log\left(\frac{g(\sqrt{1-\alpha}\,h + v))}{g(v)}\right)\right)}{|h|^{d+2s}} \Big] \ \dint{h}\, \dint{u}\, \dint{v}.\nonumber
\end{align}
Multiplying this identity by $c(d,s)$ and sending $\varepsilon \rightarrow 0$, we obtain
\begin{align}
& \int_{\bbR^{d}} \int_{\bbR^{d}} \Psi_{\palme}^{s}(\log(\rho(\cdot,y))(x) \, \rho(x,y) \ \dint{y}\,\dint{x} \notag \\
&= c(d,s)\int_{\bbR^{d}} \int_{\bbR^{d}} \int_{\bbR^{d}}  \Big[g(v) f(u) \frac{\palme\left(\log\left(\frac{f(\sqrt{\alpha}h + u)}{f(u)}\right)\right)}{|h|^{d+2s}} \notag \\
& \qquad \qquad \qquad \qquad    +  f(u) g(v) \frac{\palme\left(\log\left(\frac{g(\sqrt{1-\alpha}h + v))}{g(v)}\right)\right)}{|h|^{d+2s}} \Big] \ \dint{h}\, \dint{u}\, \dint{v} \notag \\
&= c(d,s)  \int_{\bbR^{d}} f(u)\int_{\bbR^{d}}  \frac{\palme\left(\log\left(\frac{f(\sqrt{\alpha}\,h + u)}{f(u)}\right)\right)}{|h|^{d+2s}}\ \dint{h}\, \dint{u} \notag \\
& \quad\quad  + c(d,s)  \int_{\bbR^{d}} g(v)\int_{\bbR^{d}} \frac{\palme\left(\log\left(\frac{g(\sqrt{1-\alpha}\,h + v)}{g(v)}\right)\right)}{|h|^{d+2s}}\ \dint{h}\, \dint{v} \notag \\
& \underset{w = \sqrt{1-\alpha}\,h}{\overset{z = \sqrt{\alpha}\,h}{=}} c(d,s)  \int_{\bbR^{d}} f(u)\int_{\bbR^{d}}  \frac{\palme\left(\log\left(\frac{f(z + u)}{f(u)}\right)\right)}{|z|^{d+2s}} \sqrt{\alpha}^{2s}\ \dint{z}\, \dint{u} \notag \\
& \quad\quad  + c(d,s)  \int_{\bbR^{d}} g(v)\int_{\bbR^{d}} \frac{\palme\left(\log\left( \frac{g(w + v)}{g(v)}\right)\right)}{|w|^{d+2s}} \sqrt{1-\alpha}^{2s}\ \dint{w}\, \dint{v} \notag \\
&= \alpha^{s}  \int_{\bbR^{d}} f(u) \Psi_{\palme}^{s}(\log f)(u)\ \dint{u } + (1-\alpha)^{s} \int_{\bbR^{d}} g(v) \Psi_{\palme}^{s}(\log g)(v)\ \dint{v}  \notag \\
&= \alpha^{s} \, i_{s}(f )  + (1-\alpha)^{s} \, i_{s}(g). \notag 
\end{align}
This finishes the proof of the theorem.
\end{proof}
\section{Appendix}
\subsection{Lifting property of entropy}\label{A1}
Let $(E,\calA,\mu)$ be a $\sigma$-finite measure space. We show that a lifting of the entropy 
\begin{equation*}
    h(f)=\int_E f \log f\,\dint{\mu},
\end{equation*}
defined for suitable probability densities $f$ on $E$ with respect to $\mu$, is given by the entropy on the product space, 
\[
H(F) = \int_{E \times E}\!\! F \ \log F\ \dint{\mu \otimes \mu}, 
\]
defined for suitable probability densities $F$ on $E\times E$ with respect to $\mu\oplus \mu$.
\begin{proof}
We first show condition (i) in the definition of a lifting. For every admissible probability density $f$ on $E$, we have 
\begin{align*} 
H(f \otimes f) & = \int_{E} \int_{E} f(x) f(y)\, \log\big(f(x) f(y)\big) \ \dint{\mu(y)} \ \dint{\mu(x)}\\ 
& = \int_{E}f(x) \int_{E} f(y)\, \log(f(y))\ \dint{\mu(y)} \ \dint{\mu(x)}  \notag \\ 
&\quad  + \int_{E} f(y)\int_{E} f(x) \, \log(f(x)) \ \dint{\mu(x)}\ \dint{ \mu(y)} = 2 \, h(f). \end{align*}

In order to show that the pair $(h,H)$ fulfills property (ii) of the definition of a lifting, we follow the ideas of the proofs of \cite[Thm.\ 2.2.1 and Thm.\ 2.6.5]{cover_elements_2006}. Let $F$ be an admissible symmetric probability density on $E\times E$ with respect to $\mu \otimes \mu$. Then 
\begin{align*}
& H(F) = \int_{E} \int_{ E} F(x,y) \, \log\left(\frac{F(x,y)}{\Pi_{\mu}F(x)} \, \Pi_\mu F(x)\right) \ \dint{\mu(x)}\, \dint{\mu(y)} \\
&= \int_{E} \int_{E} F(x,y) \ \dint{\mu(y)}\, \log\big(\Pi_\mu F(x)\big)\, \dint{\mu(x)}  \\
&\quad  + \int_{E} \int_{E} F(x,y) \, \log\left(\frac{F(x,y)}{\Pi_\mu F(x)}\right) \ \dint{\mu(x)}\, \dint{\mu(y)}=:H_1(F)+H_2(F),
\end{align*}
and
\[
H_1(F)=\int_{E} \Pi_\mu F(x) \log\big(\Pi_\mu F(x)\big)\, \dint{\mu(x)}=h(\Pi_\mu F).
\]
Moreover,
\begin{align*}
 H_2(F) & =  \int_{E} \int_{E} F(x,y) \, \log\left(\frac{F(x,y)}{\Pi_\mu F(x) \, \Pi_\mu F(y)}\, \Pi_\mu F(y)\right) \ \dint{\mu(x)}\, \dint{\mu(y)}\\
&=  \int_{E} \int_{E} F(x,y)\, \dint{\mu(x)}\, \log\big(\Pi_\mu F(y)\big) \ \dint{\mu(y)} \\ 
&\quad + \int_{E} \int_{E} F(x,y) \, \log\left(\frac{F(x,y)}{\Pi_\mu F(x) \,\Pi_\mu F(y)}\right) \ \dint{\mu(x)}\, \dint{\mu(y)}\\
& =:H_{2a}(F)+H_{2b}(F),
\end{align*}
and, analogously to $H_1(F)$, we have 
$H_{2a}(F)=h(\Pi_\mu F)$. Concerning $H_{2b}(F)$,
we exploit the convexity of $-\log$ and apply Jensen's inequality, thereby obtaining
\begin{align*}
H_{2b}(F) &= \int_{E} \int_{E} F(x,y) \, \left[-\log\left(\frac{\Pi_{\mu}F(x) \, \Pi_{\mu}F(y)}{F(x,y)}\right)\right] \ \dint{\mu(x)}\, \dint{\mu(y)} \\
\geq & -\log\left(\int_{E} \int_{E} F(x,y) \, \frac{\Pi_{\mu}F(x) \, \Pi_{\mu}F(y)}{F(x,y)} \ \dint{\mu(x)} \dint{\mu(y)} \right) = \log(1) = 0.
\end{align*}
Overall, this shows that $H(F) \geq 2 \, h(\Pi_\mu F).$
\end{proof}
\subsection{Proof of Remark \ref{KernProp} (i)}
In order to show the asserted measurability property of the kernel $k_{1}\oplus k_{2}$, it is sufficient to prove the statement for finite kernels.
\begin{proposition}
    \label{tensorized kernel appendix}
For $i = 1,2$, let $(M_{i},d_{i})$ be a metric space with kernel $k_{i}$. Moreover, assume that for every $x \in M_{i}$, the measure $k_{i}(x,\cdot)$ is finite, $i = 1,2$. Then $k_{1}\oplus k_{2}$ is a kernel on $M_{1}\times M_{2}$.
\end{proposition}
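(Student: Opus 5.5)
We must verify two properties. For each fixed $(x,y)$, the map $A\mapsto k_1\oplus k_2((x,y),A)$ is a measure on $\calB(M_1)\otimes\calB(M_2)$; this was already noted in Remark \ref{KernProp} (i), since sections commute with countable disjoint unions, and its $\sigma$-finiteness is immediate here because the measure is even finite. The remaining point is that for each fixed $A\in\calB(M_1)\otimes\calB(M_2)$ the map
\[
(x,y)\mapsto k_1(x,A^y)+k_2(y,A_x)
\]
is $\calB(M_1)\otimes\calB(M_2)$-measurable. By symmetry it suffices to treat $(x,y)\mapsto k_1(x,A^y)$.

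I will use a Dynkin (monotone class) argument. Let
\[
\mathcal{D}=\{A\in \calB(M_1)\otimes\calB(M_2):\ (x,y)\mapsto k_1(x,A^y)\text{ is measurable}\}.
\]
First, $\mathcal{D}$ contains all measurable rectangles $A=B\times C$. Indeed, $A^y=B$ if $y\in C$ and $A^y=\emptyset$ otherwise, so
\[
k_1(x,A^y)=k_1(x,B)\,\mathds{1}_C(y).
\]
This is a product of a measurable function of $x$ (the kernel property of $k_1$) and a measurable function of $y$, hence jointly measurable. Rectangles form a $\pi$-system generating the product $\sigma$-algebra.

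Next, $\mathcal{D}$ is a Dynkin system. It contains $M_1\times M_2$. It is closed under proper differences: if $A\subset B$ with $A,B\in\mathcal{D}$, then
\[
k_1(x,(B\setminus A)^y)=k_1(x,B^y)-k_1(x,A^y).
\]
Here finiteness of $k_1(x,\cdot)$ is used, which is exactly the reason for the hypothesis. Finally, $\mathcal{D}$ is closed under increasing unions by continuity from below, since pointwise limits of measurable functions are measurable.

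By the $\pi$-$\lambda$ theorem, $\mathcal{D}=\calB(M_1)\otimes\calB(M_2)$. The same argument applies to $k_2(y,A_x)$, and the sum of the two measurable functions is measurable. By the standing assumption $\calB(M_1\times M_2)=\calB(M_1)\otimes\calB(M_2)$, this shows that $k_1\oplus k_2$ is a kernel.

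The only delicate step is the difference property, which needs finiteness to avoid $\infty-\infty$. This is why the statement is reduced to finite kernels. The countable-sum case then follows by summing the resulting measurable functions over the pieces, since $\oplus$ is additive in the kernels.
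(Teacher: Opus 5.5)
Your proof is correct and follows the paper's argument. Both run a $\pi$-$\lambda$ argument over measurable rectangles and use the finiteness of the kernels precisely for closure under proper differences; you treat $(x,y)\mapsto k_1(x,A^y)$ and $(x,y)\mapsto k_2(y,A_x)$ separately and then add them, whereas the paper handles the sum directly, which is an immaterial difference.
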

\begin{proof}
It is clear that for fixed $(x,y) \in M_{1} \times M_{2}$, $k_{1} \oplus k_{2}((x,y),\cdot )$ is a $\sigma$-finite measure on $ \calB(M_{1}) \otimes \calB(M_{2})$. 
    
It remains to show that $k_{1} \oplus k_{2}$ depends measurably on $(x,y)$. To this end, define 
\begin{align*}&\mathcal{H} := \bigg\{ A \in \calB(M_{1}) \otimes \calB(M_{2}):(k_{1} \oplus k_{2})(\cdot , A) \text{ is measurable} \bigg\}.\end{align*}
We claim that $\mathcal{H}$ is a $\lambda$-system which contains the $\pi$-system $\calB(M_{1}) \times \calB(M_{2})$. To see the latter, let $A = B \times C \in \calB(M_{1}) \times \calB(M_{2}).$  Observe that for $x\in M_1$ and $y \in M_{2}$,
      \begin{align*}
		(B\times C)^{y} = \begin{cases}
			B &, \text{ if }y \in C,\\
			\emptyset &,\text{ else},
		\end{cases}
		\quad\text{and}\quad
		&(B\times C)_{x} = \begin{cases}
			C &,\text{ if }x \in B,\\
			\emptyset &,\text{ else}.
		\end{cases}
	\end{align*}
	Thus, by Definition \ref{TensorKernel},
    \begin{align*}
		 k_{1} \oplus k_{2}((x,y),A) & = k_{1}(x,(B\times C)^{y}) + k_{2}(y,(B\times C)_{x})\\
        &= k_{1}(x,B) \cdot \indicator{C}(y) + k_{2}(y,C) \cdot \indicator{B}(x),
	\end{align*}
	which is $\calB(M_{1}) \otimes \calB(M_{2})$-measurable, since $k_{1}(\cdot,B)$ and $k_{2}(\cdot ,C)$ depend measurably on $x$ and $y$, respectively. Thus $A \in \mathcal{H}$, and so $\calB(M_{1}) \times \calB(M_{2}) \subset \mathcal{H}$.
    
     We next show that $\mathcal{H}$ is a $\lambda$-system. Clearly, $M_{1} \times M_{2} \in \mathcal{H}$. $\mathcal{H}$ is also closed under nested differences, i.e. if $A,B \in \mathcal{H},\ A \subset B$, also $B \setminus A \in \mathcal{H}$. Indeed, letting $A,B \in \mathcal{H}$ with $A \subset B$, we have, for every $x \in M_{1}$ and $y \in M_{2}$, 
    $$ (B \setminus A)^{y} = B^{y}\setminus A^{y} \text{ and } (B \setminus A)_{x} = B_{x}\setminus A_{x}.$$
    Thus, recalling that $k_{1}(x,\cdot)$ and $k_{2}(y,\cdot)$ are finite for $x \in M_{1},\ y \in M_{2}$, 
    \begin{align*}
        k_{1} \oplus k_{2}&((x,y),B \setminus A) = k_{1}(x,B^{y} \setminus A^{y}) + k_{2}(y,B_{x} \setminus A_{x}) \\
        &= k_{1}(x,B^{y}) - k_{1}(x,A^{y}) + k_{2}(y,B_{x}) - k_{2}(y,A_{x})
    \end{align*}
    is measurable. Hence $B \setminus A \in \mathcal{H}$. Moreover, $\mathcal{H}$ is closed under increasing limits. In fact, 
let $(A_{n})_{n \in \bbN} \subset \mathcal{H}$ with $A_{n} \nearrow A.$ Then for fixed $x \in M_{1}$ and $y \in M_{2}$, we have $(A_{n})^{y} \nearrow A^{y}$ and $(A_{n})_{x} \nearrow A_{x}$. Since every measure is continuous from below,
\begin{align*}
    & k_{1} \oplus k_{2}((x,y),A) = \limes{n}{ \infty} k_{1} \oplus k_{2}((x,y),A_{n})     = \limes{n}{\infty} \big[k_{1}(x,A_{n}^{y}) + k_{2}(y,(A_{n})_{x}) \big],
    \end{align*} 
    which is again measurable, and so $A \in \mathcal{H}$. This shows that $\mathcal{H}$ is a $\lambda$-system.

    By the monotone class theorem (or $\pi$-$\lambda$ theorem), see e.g. \cite[Theorem 1.1]{kallenberg_foundations_2021}, it follows that $\calB(M_{1}) \otimes \calB(M_{2})  = \sigma(\calB(M_{1}) \times \calB(M_{2})) \subset \mathcal{H}$. 
    Hence, for every $A \in \calB(M_{1})\otimes \calB(M_{2})$, $k_{1}\oplus k_{2}(\cdot,A)$ depends measurably on $(x,y)$.
\end{proof}
\begin{remark}
 {\em
 The kernel $k_s$ defined by \eqref{fractionalkernel} with $s\in (0,1)$ is a countable sum of finite kernels. Indeed, it can be written as 
\[
k_{s}(x,dy) = \sum_{n = 1}^{\infty} \indicator{\{ \frac{1}{n+1}\leq |x-y|< \frac{1}{n}\}}(y)\frac{c(d,s)}{|x-y|^{d+2s}}\ \dint{y} + \indicator{\{1 \leq |x-y|\}}(y)\frac{c(d,s)}{|x-y|^{d+2s}}\ \dint{y}. 
\]
Hence, by means of Proposition \ref{tensorized kernel appendix}, we can conclude that $k_{s} \oplus k_{s}$ is again a kernel.
}
\end{remark}
\subsection{Proof of Proposition \ref{Sumformulas}}
We only provide a sketch of the proof. For $i=1,2$, let $(M_{i},d_{i})$ be a metric space and $k_i$ a kernel on $M_i$. We set $M=M_1\times M_2$. Let $(x,y)\in M$ and $g:\, M \rightarrow \bbR$ be measurable.

In order to prove Proposition \ref{Sumformulas} (i), we first consider a nonnegative function $g$ and show, by means of a measure-theoretic induction argument, that 
\begin{align}
& \int_{M\setminus \{(x,y)\}} g(u,v) \ k_{1} \oplus k_{2}((x,y), \dint{(u,v)}) \notag \\& =  \int_{M_{1}\setminus \{x\}} g(u,y) \ k_{1}(x,\dint{u}) + \int_{M_{2} \setminus \{y\}} g(x,v) \ k_{2}(y,\dint{v}). \label{factorization}
\end{align}
Indeed, for $g=\indicator{E}$ with $E \in \call{B}(M_{1}) \otimes \calB(M_{2})$, and abbreviating $z:=(x,y)$, we have
\begin{align*}
 &\int_{M\setminus \{z\}} g(u,v) \ k_{1} \oplus 
 k_{2}(z,\dint{(u,v)}) 
 =\int_{M}  \indicator{E \cap (M \setminus \{z\})}(u,v)  \ k_{1} \oplus 
 k_{2}(z,\dint{(u,v)})\\
 &= k_{1} \oplus k_{2}(z,E\cap (M \setminus \{z\}))= k_{1}(x,(E\cap (M \setminus \{z\}))^{y})+k_{2}(y,(E\cap (M \setminus \{z\}))_{x})\\
 &=\int_{M_{1} \setminus \{x\}}  \indicator{E^{y}}(u) \ k_{1}(x,\dint{u}) + \int_{M_{2} \setminus \{y\}} \indicator{E_{x}}(v) \ k_{1}(x,\dint{u}),
\end{align*}
which equals the right-hand side of \eqref{factorization}. Here we use that
 $(E\cap (M \setminus \{(z)\}))^{y} = E^{y} \cap (M_{1} \setminus \{x\})$ and $\indicator{E^{y}}(u) = \indicator{ E}(u,y)$, and the analogous relations for the $x$-sections. It then follows immediately that \eqref{factorization} extends to simple functions. Using this, 
\eqref{factorization} with measurable $g\ge 0$ is obtained by approximation with simple functions.

Let now $f:\,M\to \bbR$ be measurable and assume that $L_{1}(f^y)(x)$ and $L_{2}(f_x)(y)$ exist.
Then assertion (i) in Proposition \ref{Sumformulas} (in the case of nonsingular kernels) is obtained by applying \eqref{factorization} to the positive and negative part of the function $g:\, M \rightarrow \bbR$, $g(u,v)=f(u,v) - f(x,y)$.
The singular case can be treated by similar, albeit more technical, arguments.

Assertion (ii) in Proposition \ref{Sumformulas} 
follows directly from \eqref{factorization} when applied to the nonnegative, measurable function 
\[
g(u,v)=\palme\big(f(u,v)-f(x,y)\big),\quad (u,v)\in M.
\]

\printbibliography

\end{document}